\def\input@path{{"C:/Users/magno/Dropbox/Universita/Ricerche/Contact Curves/Latex/"}}
\providecommand{\tabularnewline}{\\}
\numberwithin{equation}{section}
\numberwithin{figure}{section}
\numberwithin{table}{section}
\theoremstyle{plain}
\newtheorem{thm}{\protect\theoremname}[section]
\theoremstyle{definition}
\newtheorem{defn}[thm]{\protect\definitionname}
\theoremstyle{definition}
\newtheorem{example}[thm]{\protect\examplename}
\theoremstyle{remark}
\newtheorem{rem}[thm]{\protect\remarkname}
\theoremstyle{plain}
\newtheorem{prop}[thm]{\protect\propositionname}
\theoremstyle{plain}
\newtheorem{lem}[thm]{\protect\lemmaname}
\theoremstyle{plain}
\newtheorem{cor}[thm]{\protect\corollaryname}
\theoremstyle{remark}
\newtheorem{claim}[thm]{\protect\claimname}
\subjclass[2010]{Primary 14N10, 14C17, 53D12; Secondary 14N35, 14Q99, 14L30}
\providecommand{\claimname}{Claim}
\providecommand{\corollaryname}{Corollary}
\providecommand{\definitionname}{Definition}
\providecommand{\examplename}{Example}
\providecommand{\lemmaname}{Lemma}
\providecommand{\propositionname}{Proposition}
\providecommand{\remarkname}{Remark}
\providecommand{\theoremname}{Theorem}
\begin{document}
\global\long\def\d{\mathrm{d}}%
\global\long\def\ev{\mathrm{ev}}%
\global\long\def\ctopT{c_{\mathrm{top}}^{T}}%
\global\long\def\ctop{c_{\mathrm{top}}}%
\global\long\def\ch{\mathrm{ch}}%
\global\long\def\td{\mathrm{td}}%
\global\long\def\CSM{\mathbb{S}}%
\global\long\def\f{\delta}%

\title{Enumeration of rational contact curves via torus actions}
\author{Giosuè Muratore}
\date{\today}
\address{Department of Mathematics, Universidade Federal de Minas Gerais, Belo
Horizonte, MG, Brazil.}
\email{\href{mailto:muratore.g.e@gmail.com}{muratore.g.e@gmail.com}}
\urladdr{\url{https://sites.google.com/view/giosue-muratore}}
\keywords{Legendrian, contact, torus action, Bott formula, enumeration.}
\begin{abstract}
We prove that some Gromov-Witten numbers associated to rational contact
(Legendrian) curves in any contact complex projective space with arbitrary
incidence conditions are enumerative. Also, we use Bott formula on
the Kontsevich space to find the exact value of those numbers. As
an example, the numbers of rational contact curves of low degree in
$\mathbb{P}^{3}$ and $\mathbb{P}^{5}$ are computed. The results
are consistent with existing results.
\end{abstract}

\maketitle

\section{Introduction}

A contact structure on a smooth complex manifold $X$ of odd dimension
is a corank $1$ non-integrable holomorphic distribution. For example,
projective spaces $\mathbb{P}^{2n+1}$ carry a canonical contact structure.
Roughly speaking, this structure can be defined by the association,
for each point $p\in\mathbb{P}^{2n+1}$, of a hyperplane $H_{p}\subset\mathbb{P}^{2n+1}$
containing $p$. 

Contact structures attracted a lot of interest in recent years, both
from mathematicians and physicist, and their classification is still
ongoing. For example, Ye proved in \cite{ye1994note} that a contact
threefold is either $\mathbb{P}^{3}$ or $\mathbb{P}(T_{M})$ for
some smooth surface $M$. Druel \cite{druel1999structures} proved
that toric contact manifolds are either $\mathbb{P}^{2n+1}$ or $\mathbb{P}(T_{\mathbb{P}^{1}\times\cdots\times\mathbb{P}^{1}})$.
Finally, \cite{druel1998structures} and \cite{kebekus2000projective}
partially classify contact manifolds by the positivity of the canonical
divisor using Mori theory.

A submanifold of a contact manifold $X$ is called contact (or Legendrian)
if it is point-wise tangent to the contact distribution. Contact curves
play a central role in the classification of contact manifolds, as
well as in the theory of superminimal Riemann surfaces (see \cite{bryant1982conformal}).
In this article we are interested in \textbf{rational} contact curves
$C\subset\mathbb{P}^{2n+1}$. Despite their importance, enumerative
invariants of contact curves are largely unknown, even for projective
spaces. The first paper to address this problem is \cite{levcovitz2011symplectic},
where mainly planar curves in $\mathbb{P}^{3}$ are analyzed. But
the first work studying this problem using torus actions is Amorim's
Ph.D. thesis \cite{Eden}. The main result is the computation of the
equivariant class of the class of contact curves in the Kontsevich
moduli space $\overline{M}_{0,0}(\mathbb{P}^{3},d)$. Using Atiyah-Bott
formula, he computes an invariant related to the number of degree
$3$ (resp. $4$) rational contact curves meeting $7$ (resp. $9$)
general lines in $\mathbb{P}^{3}$.

In this article, we will prove that the moduli space of rational contact
stable maps is of the expected dimension (Lemma \ref{lem:ExpDim}).
In particular, Gromov-Witten invariants of rational contact curves
are enumerative (Corollary \ref{cor:Enum}). Further, we will use
torus actions and equivariant cohomology to provide virtually all
enumerative invariants of rational contact curves of degree $d$ in
any projective space meeting an arbitrary number of linear subspaces.
In the last section, we list such invariants for $\mathbb{P}^{3}$
and $\mathbb{P}^{5}$, and low values of $d$ (Table \ref{tab:EnuNum},
\ref{tab:EnuNuminP5} and \ref{tab:EnuNuminP5-con}). More invariants
will be in \cite{MurSch}.

The results of this article can be used to find any enumerative invariants
of rational contact curves. These results are based on Atiyah-Bott
formula, which usually requires to perform tedious computations. I
think that quantum cohomology can be used to find a generating function
giving recursively all enumerative invariants of rational contact
curves. Such generating function would look like the generating function
for usual rational curves in $\mathbb{P}^{3}$, see \cite[Section 9]{FP}.
This may be the topic of a future work.

I would like to thank Israel Vainsencher for introducing to me contact
geometry. I also thank Alex Abreu for a fruitful discussion and Éden
Amorim for explaining me some part of his thesis. The author is supported
by postdoctoral fellowship PNPD-CAPES.

\section{Contact structures}

In this section we recall basic facts about complex contact structures.
All varieties are defined over $\mathbb{C}$.
\begin{defn}[\cite{kobayashi1959remarks}]
\label{def:ContactStructure}Let $X$ be a $(2n+1)$-dimensional
complex manifold. A contact structure on $X$ is a open cover $\{U_{i}\}_{i}$
of $X$ together with $1$-forms $\alpha_{i}$ on each $U_{i}$ such
that:
\begin{enumerate}
\item at every point of $U_{i}$, the form $\alpha_{i}\wedge(\d\alpha_{i})^{\wedge n}$
is non-zero, and
\item if the intersection $U_{i}\cap U_{j}$ is non empty, then there exists
a non-vanishing holomorphic function $f_{ij}$ on $U_{i}\cap U_{j}$
such that $\alpha_{i}=f_{ij}\alpha_{j}$ on $U_{i}\cap U_{j}$.
\end{enumerate}
\end{defn}

\begin{defn}
\label{def:ContactCurve}Let $X$ be a manifold with a contact structure
$\{U_{i},\alpha_{i}\}$. A curve $C\subset X$ is a contact curve
if any local $1$-form $\alpha_{i}$ vanishes at every smooth point
of $C\cap U_{i}$.
\end{defn}

All odd-dimensional projective spaces $\mathbb{P}^{2n+1}=\mathbb{P}(\mathbb{C}^{2n+2})$
are contact. Indeed, let $\{x_{i}\}_{i=0}^{2n+1}$ be local coordinates
in $\mathbb{C}^{2n+2}$, and let
\begin{equation}
\alpha:=\sum_{i=0}^{n}x_{2i}\d x_{2i+1}-x_{2i+1}\d x_{2i}\label{eq:1Form}
\end{equation}
be a $1$-form on $\mathbb{C}^{2n+2}\backslash\{0\}$. Let $\{U_{i}\}_{i}$
be an open cover of $\mathbb{P}^{2n+1}$, and for every index $i$
let $r_{i}$ be a holomorphic section over $U_{i}$ of the standard
projection $\mathbb{C}^{2n+2}\backslash\{0\}\rightarrow\mathbb{P}^{2n+1}$.
Then $\{U_{i},r_{i}^{*}\alpha\}$ is a contact structure. The line
given by the equations
\[
x_{i}=0,\,i=1,\ldots,2n
\]
is an example of a contact curve. The moduli space of contact lines
in $\mathbb{P}^{2n+1}$ has a very explicit description.
\begin{example}
\label{exa:SL}Let us consider the symplectic form $\d\alpha\in\wedge^{2}(\mathbb{C}^{2n+2})^{\vee}$.
Let $\mathcal{S}$ be the tautological rank $2$ vector bundle on
the Grassmannian $G:=G(2,2n+2)$. The canonical inclusion $\mathcal{S}\subset\mathbb{C}^{2n+2}\times\mathcal{O}_{G}$
induces a surjective map:
\[
\wedge^{2}(\mathbb{C}^{2n+2})^{\vee}\times\mathcal{O}_{G}\twoheadrightarrow\wedge^{2}\mathcal{S}^{\vee},
\]
where $\wedge^{2}\mathcal{S}^{\vee}=\mathcal{O}_{G}(1)$ is the Plücker
line bundle. Hence $\d\alpha$ induces a global section of $\mathcal{O}_{G}(1)$,
whose zero locus is exactly the locus of $2$-dimensional subspaces
of $\mathbb{C}^{2n+2}$ isotropic with respect to $\d\alpha$. Such
locus is usually denoted by $SG(2,2n+2)$ and parameterizes contact
projective lines in $\mathbb{P}^{2n+1}$.
\end{example}

\begin{rem}
It is not hard to see that the definition of contact manifold is equivalent
to the following: A contact structure on $X$ is a pair $(X,L)$ where
$L$ is a line subbundle of $\Omega_{X}^{1}$ such that if $s$ is
a non trivial local section of $L$, then $s\wedge(\d s)^{\wedge n}$
is everywhere non-zero. So, contact curves are exactly those curves
$C$ such that $s_{|T_{C}}=0$.
\end{rem}

\begin{example}
\label{exa:ContactOddDim}Any symplectic form on $\mathbb{C}^{2n+2}$
induces in a natural way a subbundle $\mathcal{O}_{\mathbb{P}^{2n+1}}(-2)\hookrightarrow\Omega_{\mathbb{P}^{2n+1}}^{1}$.
All contact structures of $\mathbb{P}^{2n+1}$ can be constructed
this way, and they are equivalent by a change of coordinates, see
\cite[1.4.2]{okonek1980vector}.
\end{example}

\begin{prop}
\label{prop:EqDefs}Let $X$ be a manifold with a contact structure
$\{U_{i},\alpha_{i}\}_{i}$, and let $C\subset X$ be a curve. Let
$L$ be the contact line bundle and let $\mathcal{D}:=\ker(T_{X}\twoheadrightarrow L^{\vee})$.
The following are equivalent.
\begin{itemize}
\item $C$ is a contact curve.
\item $T_{C,p}\subset\mathcal{D}_{p}$ for every smooth point $p\in C$.
\end{itemize}
Moreover if $p\in C\cap U_{i}$, then $T_{C,p}$ is an isotropic vector
subspace of the symplectic space $(\mathcal{D}_{p},\d\alpha_{i})$.
\end{prop}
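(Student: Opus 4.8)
The plan is to reduce the statement to the local description of the distribution $\mathcal{D}$ in terms of the defining forms $\alpha_i$. First I would unwind the surjection $T_X \twoheadrightarrow L^\vee$: by construction it is the dual of the inclusion $L \hookrightarrow \Omega_X^1$, and over $U_i$ the line bundle $L$ is framed by $\alpha_i$, since by condition (2) of Definition \ref{def:ContactStructure} the local sections of $L$ are precisely the multiples of $\alpha_i$. Trivializing $L$ and $L^\vee$ by $\alpha_i$ and its dual frame, the map $T_X \to L^\vee$ sends a tangent vector $v$ to the scalar $\alpha_i(v)$. Hence, for $p \in U_i$,
\[
\mathcal{D}_p = \ker\left(\alpha_i|_p\right) = \{\, v \in T_{X,p} : \alpha_i(v) = 0 \,\}.
\]
I would also note that this is independent of $i$: on $U_i \cap U_j$ one has $\alpha_i = f_{ij}\alpha_j$ with $f_{ij}$ nowhere vanishing, so $\ker \alpha_i = \ker \alpha_j$ and $\mathcal{D}$ is well defined.

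With this identification the equivalence is immediate. By Definition \ref{def:ContactCurve} (as clarified by the preceding remark), $C$ is a contact curve exactly when, for every smooth point $p \in C \cap U_i$, the form $\alpha_i$ restricted to $T_{C,p}$ vanishes, i.e. $\alpha_i(v) = 0$ for all $v \in T_{C,p}$. By the formula above this is precisely the condition $T_{C,p} \subset \mathcal{D}_p$. Since the $U_i$ cover $X$ and both conditions are local, the two bullet points are equivalent.

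For the final assertion I would first recall why $(\mathcal{D}_p, \d\alpha_i)$ is symplectic: the non-degeneracy axiom (1), namely $\alpha_i \wedge (\d\alpha_i)^{\wedge n} \neq 0$ at $p$, forces the restriction of $\d\alpha_i$ to the $2n$-dimensional subspace $\mathcal{D}_p = \ker\alpha_i|_p$ to be non-degenerate, hence a symplectic form. Isotropy of $T_{C,p}$ is then automatic: at a smooth point $T_{C,p}$ is $1$-dimensional, say spanned by $v$, and $\d\alpha_i(v,v) = 0$ because $\d\alpha_i$ is alternating; thus $T_{C,p}$ is an isotropic line in $(\mathcal{D}_p, \d\alpha_i)$.

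The only substantive step is the first one, translating the abstract definition of $\mathcal{D}$ as $\ker(T_X \twoheadrightarrow L^\vee)$ into the concrete kernel of $\alpha_i$; I would be careful to fix the trivializations explicitly so that the pairing $\langle \alpha_i, v\rangle = \alpha_i(v)$ is unambiguous. Everything else follows formally from the definitions and the antisymmetry of $\d\alpha_i$, so I do not anticipate any genuine obstacle.
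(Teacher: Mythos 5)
Your proof is correct, and for the equivalence of the two bullet points it takes essentially the paper's route: the paper likewise reduces the contact condition $i^{*}\alpha_{i}\equiv0$ (for $i:C\rightarrow X$ the inclusion) to the containment $T_{C,p}\subset\mathcal{D}_{p}$, though it leaves implicit the identification $\mathcal{D}_{|U_{i}}=\ker\alpha_{i}$ that you spell out by trivializing $L$ by $\alpha_{i}$ and checking chart-independence via $\alpha_{i}=f_{ij}\alpha_{j}$ --- a worthwhile bit of added rigor. Where you genuinely diverge is the final isotropy assertion. The paper argues via naturality of the exterior derivative: $i^{*}\alpha\equiv0$ forces $i^{*}\d\alpha=\d\, i^{*}\alpha\equiv0$, hence $T_{C,p}$ is isotropic in $(\mathcal{D}_{p},\d\alpha_{i})$. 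You instead observe that at a smooth point $T_{C,p}$ is a line, and any line is isotropic for an alternating $2$-form, so isotropy is automatic once the containment is established. Your argument is more elementary and makes transparent that the ``moreover'' clause carries no extra content for curves; the paper's pullback argument is the one that scales, since it proves isotropy for integral submanifolds of $\mathcal{D}$ of any dimension (the genuinely Legendrian setting), where the dimension count gives nothing. Your justification that $(\mathcal{D}_{p},\d\alpha_{i})$ is symplectic --- nondegeneracy of $\d\alpha_{i}$ on the $2n$-dimensional kernel forced by $\alpha_{i}\wedge(\d\alpha_{i})^{\wedge n}\neq0$ --- matches the paper's one-line appeal to condition (1) of Definition \ref{def:ContactStructure}. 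Both proofs are complete.
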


\begin{proof}
By Definition \ref{def:ContactStructure}, it follows that for every
$1$-form $\alpha$ the $2$-form $\d\alpha$ restricted to $\mathcal{D}_{p}$
is a symplectic form.

Let $i:C\rightarrow X$ be the inclusion. The condition for $C$ to
be contact becomes $i^{*}\alpha_{i}\equiv0$, that is $T_{C,p}\subset\mathcal{D}_{p}$.
On the other hand, if $T_{C,p}\subset\mathcal{D}_{p}$ then $C$ is
contact. Finally, $i^{*}\alpha\equiv0$ implies $i^{*}\d\alpha\equiv0$
so that $T_{C,p}$ is an isotropic subspace of $(\mathcal{D}_{p},\d\alpha_{i})$.
\end{proof}

\section{Contact stable maps}
\begin{defn}
A stable map of $\mathbb{P}^{n}$ is the datum $(C,f,p_{1},\ldots,p_{m})$
where $C$ is a projective, connected, nodal curve of arithmetic genus
$0$, the markings $p_{1},\ldots,p_{m}$ are distinct nonsingular
points of $C$, and $f:C\rightarrow\mathbb{P}^{n}$ is a morphism
such that $f_{*}([C])$ is a $1$-cycle of degree $d$. Moreover,
for every rational component $E\subset C$ mapped to a point, $E$
contains at least three points among marked points and nodal points.
Two stable maps, $(C,f,p_{1},\ldots,p_{m})$ and $(C',f',p'_{1},\ldots,p'_{m})$,
are equivalent if there exists an isomorphism $\varphi:C\rightarrow C'$
such that $f=f'\circ\varphi$ and $\varphi(p_{i})=p'_{i}$ for all
$i=1,\ldots,m$.
\end{defn}

For any non-negative integer $m$, we denote by $\overline{\mathcal{M}}_{0,m}(\mathbb{P}^{n},d)$
the moduli stack of stable maps of degree $d$, with coarse moduli
space $\overline{M}_{0,m}(\mathbb{P}^{n},d)$. Moreover, we denote
by
\[
\xymatrix{\mathcal{U}_{m}\ar[d]^{\pi}\ar[r]^{\ev_{m,0}} & \mathbb{P}^{n}\\
\overline{\mathcal{M}}_{0,m}(\mathbb{P}^{n},d)
}
\]
the universal family. For $i=1,\ldots,m$ we denote by $\ev_{i}:\overline{M}_{0,m}(\mathbb{P}^{n},d)\rightarrow\mathbb{P}^{n}$
the evaluations maps. Further, there are two interesting subschemes
of $\overline{M}_{0,m}(\mathbb{P}^{n},d)$. They are $M_{0,m}(\mathbb{P}^{n},d)$
and $\overline{M}_{0,m}^{*}(\mathbb{P}^{n},d)$, with the following
properties:
\begin{itemize}
\item $M_{0,m}(\mathbb{P}^{n},d)$ parameterizes stable maps with irreducible
domain,
\item $\overline{M}_{0,m}^{*}(\mathbb{P}^{n},d)$ is smooth and it is a
fine moduli space of automorphisms-free stable maps.
\end{itemize}
Both of them are open and dense in $\overline{M}_{0,m}(\mathbb{P}^{n},d)$.
See \cite{FP} for more details.

As mentioned in Example \ref{exa:ContactOddDim}, the space $\mathbb{P}^{2n+1}$
has an essentially unique contact structure with contact line bundle
$L=\mathcal{O}_{\mathbb{P}^{2n+1}}(-2)$.
\begin{defn}
Let $C$ be a projective, connected, nodal curve of arithmetic genus
$0$. A contact morphism is a morphism $f:C\rightarrow\mathbb{P}^{2n+1}$
such that for any local section $s$ of $L$, the map $f^{*}s$ vanishes
at every point of $C$. 
\end{defn}

A morphism $\mathbb{P}^{1}\rightarrow\mathbb{P}^{2n+1}$ of degree
$d$ is given by $2n+2$ sections of $H^{0}(\mathbb{P}^{1},\mathcal{O}_{\mathbb{P}^{1}}(d))$,
modulo a multiplicative constant. That is, a point
\[
[s_{0}:\ldots:s_{2n+1}]\in\mathbb{P}(V),
\]
where $V$ is the $(2n+2)$-fold product of $H^{0}(\mathbb{P}^{1},\mathcal{O}_{\mathbb{P}^{1}}(d))$.
Such morphism is contact exactly when the pull back of the $1$-form
(\ref{eq:1Form}) vanishes. That is, when $\sum_{i=0}^{n}s_{2i}\d s_{2i+1}-s_{2i+1}\d s_{2i}=0$.
We will use the following result of Kobak and Loo.
\begin{prop}
\label{prop:KL}Let $\mathring{\mathscr{M}}_{n,d}\subset\mathbb{P}(V)$
be the moduli space of contact morphisms $\mathbb{P}^{1}\rightarrow\mathbb{P}^{2n+1}$
of degree $d$. Then $\mathring{\mathscr{M}}_{n,d}$ is a connected
quasi-projective reduced variety of dimension $2n(d+1)+2$.
\end{prop}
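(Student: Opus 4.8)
The plan is to realise $\mathring{\mathscr{M}}_{n,d}$ as the zero locus of an explicit system of quadratic equations on $\mathbb{P}(V)$ and to compute its dimension by a fibration argument. Writing each $s_i$ as a polynomial of degree $d$ in an affine coordinate $t$ of $\mathbb{P}^1$, the contact condition $\sum_{i=0}^n(s_{2i}s_{2i+1}'-s_{2i+1}s_{2i}')=0$ is the vanishing of the form $\Phi(s):=\sum_{i=0}^n W(s_{2i},s_{2i+1})$, where $W(a,b):=ab'-ba'$. Since $\deg W(a,b)\le 2d-2$, the assignment $s\mapsto\Phi(s)$ is a homogeneous quadratic map $V\to H^0(\mathbb{P}^1,\mathcal{O}_{\mathbb{P}^1}(2d-2))$ onto a target of dimension $2d-1$; thus $\mathring{\mathscr{M}}_{n,d}$ is the open subset of $\{\Phi=0\}\subset\mathbb{P}(V)$ consisting of tuples with no common zero, i.e.\ genuine degree $d$ morphisms. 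The naive count gives $\dim\mathbb{P}(V)-(2d-1)=(2n+2)(d+1)-1-(2d-1)=2n(d+1)+2$, the asserted value, so the whole content is that these $2d-1$ quadrics cut out the expected codimension and that the result is reduced and connected.

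First I would establish the dimension by fibring the affine cone $\widehat{\mathscr{M}}_{n,d}=\{\Phi=0\}\subset V$ over the first $n$ pairs $Q=(P_0,\dots,P_{n-1})$ in the $2n(d+1)$-dimensional space $H^0(\mathcal{O}(d))^{2n}$. The fibre over $Q$ is $W^{-1}(-\Psi(Q))$, where $\Psi(Q)=\sum_{i<n}W(P_i)$ and $W\colon H^0(\mathcal{O}(d))^2\to H^0(\mathcal{O}(2d-2))$ is the Wronskian map. The key structural facts are that $W$ is $SL_2$-invariant (indeed $W(g\cdot(a,b))=\det(g)\,W(a,b)$), that it is dominant, and that its generic fibre is a finite union of $3$-dimensional $SL_2$-orbits; since $\Psi$ is already dominant for $n\ge 1$, the generic value $-\Psi(Q)$ lies in the locus where $W$ has $3$-dimensional fibres, so the dominant stratum of $\widehat{\mathscr{M}}_{n,d}$ has dimension $2n(d+1)+3$ and the projective dimension is $2n(d+1)+2$.

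Reducedness I would obtain from generic smoothness. The differential of $\Phi$ at $s$ sends a variation to a sum of terms $W(s_j,\cdot)$, so its image is the span $\sum_{j=0}^{2n+1}\operatorname{im}\bigl(W(s_j,-)\colon H^0(\mathcal{O}(d))\to H^0(\mathcal{O}(2d-2))\bigr)$. A short computation shows each summand has dimension $d$, the kernel of $W(s_j,-)$ being the multiples of $s_j$, and I would prove that when the $s_j$ have no common zero this span is all of $H^0(\mathcal{O}(2d-2))$; hence $d\Phi$ is surjective at every point of $\mathring{\mathscr{M}}_{n,d}$, which is therefore smooth of the expected dimension, in particular reduced. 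For connectedness I would argue that the closure $\{\Phi=0\}\subset\mathbb{P}(V)$ is a complete intersection of positive dimension and invoke the connectedness of positive-dimensional complete intersections in projective space, then descend to $\mathring{\mathscr{M}}_{n,d}$ using that a Cohen--Macaulay scheme is connected in codimension one together with a bound on the codimension of the degenerate locus.

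The main obstacle is the uniform dimension bound, i.e.\ ruling out degenerate components of dimension larger than expected. The Wronskian fibres jump: over $0$ the fibre $W^{-1}(0)=\{(a,\lambda a)\}$ has dimension $d+2$ rather than $3$, and in general the locus $\{\psi:\dim W^{-1}(\psi)\ge 3+j\}$ must be shown to have codimension at least $j$ in $H^0(\mathcal{O}(2d-2))$. Controlling these jumping loci, equivalently the stratification of degree $d$ self-maps of $\mathbb{P}^1$ by their ramification profile, and checking by induction on $n$ that the strata of $\widehat{\mathscr{M}}_{n,d}$ lying over them, where pairs become proportional or sections acquire common factors, contribute dimension strictly less than $2n(d+1)+3$, is the delicate part on which the complete intersection property, and hence both the exact dimension and connectedness, ultimately rest.
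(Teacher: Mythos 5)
The paper offers no proof of this proposition at all: its ``proof'' is the citation \cite[PROPOSITION 4.3]{KL}, so you are attempting an independent argument, and judged on its own terms it is a plan with the hard content deferred rather than a proof. You set up the right object (the quadratic map $\Phi\colon V\to H^{0}(\mathbb{P}^{1},\mathcal{O}_{\mathbb{P}^{1}}(2d-2))$ built from Wronskians, with the correct expected-dimension count), but everything the proposition actually asserts rests on the three claims you explicitly leave open: (i) the uniform bound on the Wronskian jumping loci, needed to show \emph{every} component of $\{\Phi=0\}$ has the expected dimension (your fibration via $\Psi$ only computes the dimension of the dominant stratum; since $W^{-1}(0)$ has dimension $d+2$, the jumping is real and this stratification argument is the substance of the statement, not a finishing touch); (ii) the surjectivity of $d\Phi$ at every base-point-free tuple, which you announce with ``I would prove that'' --- this is not a short computation, as it amounts to showing no nonzero functional on $H^{0}(\mathcal{O}_{\mathbb{P}^{1}}(2d-2))$ annihilates all the images of $W(s_{j},-)$ simultaneously; note that it would yield \emph{smoothness} of $\mathring{\mathscr{M}}_{n,d}$, strictly more than the stated reducedness, which should make you suspicious that it is either delicate or false at special contact tuples; and (iii) the complete intersection property, which is (i) again. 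Also, the structural fact that generic fibres of $W$ are finite unions of $3$-dimensional $SL_{2}$-orbits depends on finiteness of the Wronski map $G(2,d+1)\to\mathbb{P}^{2d-2}$ (Eisenbud--Harris), which you use without proof or reference.

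There is moreover a step that fails as stated, in the case most relevant to this paper. For connectedness you propose to pass from the closed locus $\{\Phi=0\}$ to the open part $\mathring{\mathscr{M}}_{n,d}$ using connectedness in codimension one ``together with a bound on the codimension of the degenerate locus.'' But the locus of tuples with a common zero is the image of $(h,\tilde{s})\mapsto h\tilde{s}$ on $\mathbb{P}^{1}\times\{\Phi_{d-1}=0\}$ (since $\Phi(h\tilde{s})=h^{2}\Phi(\tilde{s})$), of dimension $1+\bigl(2nd+2\bigr)=2nd+3$, hence of codimension $2n-1$ inside the expected-dimensional $\{\Phi=0\}$. For $n=1$, i.e.\ $\mathbb{P}^{3}$, this is a \emph{divisor}, and removing a divisor from a scheme connected in codimension one can disconnect it, so your mechanism does not establish connectedness of $\mathring{\mathscr{M}}_{1,d}$ (for $n\ge2$ the codimension is $\ge3$ and the descent is fine, modulo (i)). In summary: the skeleton is sensible and the naive dimension count is correct, but the proposal concedes precisely the points where the proposition lives, and the connectedness descent needs repair for $n=1$; as it stands, the honest conclusion is that one should do what the paper does and invoke \cite[PROPOSITION 4.3]{KL}, or else genuinely carry out the stratification and transversality analysis you sketch.
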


\begin{proof}
See \cite[PROPOSITION 4.3]{KL}.
\end{proof}
Note that the property of being contact is clearly invariant by automorphisms
of $C$.
\begin{defn}
A contact stable map is a stable map $(C,f,p_{1},\ldots,p_{m})$ of
$\mathbb{P}^{2n+1}$ such that $f:C\rightarrow\mathbb{P}^{2n+1}$
is a contact morphism for some (hence, any) representative of $(C,f,p_{1},\ldots,p_{m})$.
We denote by $\CSM_{m}(d)$ the space of all contact stable maps of
degree $d$ and $m$ marks, with canonical inclusion $j:\CSM_{m}(d)\rightarrow\overline{M}_{0,m}(\mathbb{P}^{2n+1},d)$.
For every $i=1,\ldots,m$, we have maps $\tilde{\ev}_{i}:\CSM_{m}(d)\rightarrow\mathbb{P}^{2n+1}$
given by composition $\tilde{\ev}_{i}:=\ev_{i}\circ j$.
\end{defn}

\begin{rem}
If $C$ is a contact curve contained in $\mathbb{P}^{2n+1}$, and
$f:C\rightarrow\mathbb{P}^{2n+1}$ is the embedding, then $(C,f,p_{1},\ldots,p_{m})$
is a contact stable map. Anyway, not all contact stable maps are of
this form. For example, if $f$ is a finite cover of a contact curve,
then $(C,f,p_{1},\ldots,p_{m})$ is a contact stable map.
\end{rem}

The moduli space of contact stable maps in $\mathbb{P}^{2n+1}$ is
the zero locus of a section of a vector bundle on $\overline{M}_{0,m}(\mathbb{P}^{2n+1},d)$,
that is:
\begin{thm}
\label{thm:Moduli}$\CSM_{m}(d)$ is the zero locus of a section of
\[
\mathcal{E}_{d,m}:=\pi_{*}(\omega_{\pi}\otimes\ev_{m,0}^{*}L^{\vee}).
\]
In particular, the expected dimension of $\CSM_{m}(d)$ is $2n(d+1)-1+m$.
\end{thm}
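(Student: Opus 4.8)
The plan is to realize the contact condition as the vanishing of a single global section of $\mathcal{E}_{d,m}$, and then to read off the expected dimension as $\dim\overline{M}_{0,m}(\mathbb{P}^{2n+1},d)$ minus the rank of $\mathcal{E}_{d,m}$.

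First I would build the relevant section on the universal curve. Over $\mathcal{U}_m$ the codifferential of the universal map gives a canonical morphism $\ev_{m,0}^*\Omega_{\mathbb{P}^{2n+1}}^{1}\to\Omega_{\pi}^{1}$, which I compose with the natural map $\Omega_{\pi}^{1}\to\omega_\pi$ from relative K\"ahler differentials to the relative dualizing sheaf (an isomorphism away from the nodes). Restricting along the contact subbundle $\ev_{m,0}^*L\hookrightarrow\ev_{m,0}^*\Omega_{\mathbb{P}^{2n+1}}^{1}$ (recall $L=\mathcal{O}_{\mathbb{P}^{2n+1}}(-2)$ from Example \ref{exa:ContactOddDim}) produces a map $\ev_{m,0}^*L\to\omega_\pi$, that is, a section
\[
\sigma\in H^{0}\!\left(\mathcal{U}_m,\ \omega_\pi\otimes\ev_{m,0}^*L^\vee\right).
\]
By the definition of a contact morphism, the restriction $\sigma|_C$ to the fibre over a point $[C,f,p_1,\dots,p_m]$ is exactly the map $f^*L\to\omega_C$ whose vanishing records that $f^*s\equiv0$ for every local section $s$ of $L$; hence $\sigma|_C=0$ if and only if the stable map is contact (the fibrewise content of Proposition \ref{prop:EqDefs}).

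The next step is to push $\sigma$ forward and to see that $\pi_*\sigma$ is a section of a genuine vector bundle whose zero locus is $\CSM_m(d)$. For this I need $\mathcal{E}_{d,m}=\pi_*(\omega_\pi\otimes\ev_{m,0}^*L^\vee)$ to be locally free with fibre $H^{0}(C,\omega_C\otimes f^*L^\vee)$, which by cohomology and base change reduces to the vanishing $H^{1}(C,\omega_C\otimes f^*L^\vee)=0$ for every stable map $[C,f]$. By Serre duality on the nodal genus-$0$ curve $C$ this is equivalent to $H^{0}(C,f^*L)=H^{0}(C,f^*\mathcal{O}_{\mathbb{P}^{2n+1}}(-2))=0$, which I would prove by a connectedness argument: on every component $E$ the line bundle $f^*\mathcal{O}(-2)$ has degree $-2\deg(f|_E)\le0$, strictly negative on each non-contracted component; since $d>0$ the connected curve $C$ has at least one such component, and propagating the forced vanishing of a section through the tree-like dual graph kills any global section on every component, contracted or not. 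Granting this, $R^{1}\pi_*=0$, so $\mathcal{E}_{d,m}$ is locally free, base change identifies its fibre with $H^{0}(C,\omega_C\otimes f^*L^\vee)$ and the value of $\pi_*\sigma$ there with $\sigma|_C$, whence the zero locus of $\pi_*\sigma$ is precisely $\CSM_m(d)$.

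Finally I would compute the rank and the expected dimension. On a fibre, Riemann--Roch gives $\chi(C,\omega_C\otimes f^*L^\vee)=\deg(\omega_C\otimes f^*L^\vee)+1-p_a(C)=(2d-2)+1=2d-1$, and the vanishing above yields $\operatorname{rank}\mathcal{E}_{d,m}=2d-1$. Since the zero locus of a section of a rank-$r$ bundle has expected dimension $\dim(\text{base})-r$, and $\dim\overline{M}_{0,m}(\mathbb{P}^{2n+1},d)=(2n+2)d+2n-2+m$, the expected dimension of $\CSM_m(d)$ is
\[
\big[(2n+2)d+2n-2+m\big]-(2d-1)=2n(d+1)-1+m,
\]
as claimed. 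Reassuringly, this equals the dimension of $\mathring{\mathscr{M}}_{n,d}$ from Proposition \ref{prop:KL} after subtracting $\dim\mathrm{PGL}_2=3$ for reparametrisations and adding the $m$ markings. The main obstacle is the cohomology vanishing of the second step, on which local freeness, the base-change identification of fibres, and the Riemann--Roch count all rest; everything else is the formal machinery of zero loci. I would also note that the construction naturally lives on the stack $\overline{\mathcal{M}}_{0,m}(\mathbb{P}^{2n+1},d)$ and descends to the coarse space appearing in the statement.
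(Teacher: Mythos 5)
Your proposal is correct and follows essentially the same route as the paper: realizing $\CSM_m(d)$ as the zero locus of the canonical section of $\mathcal{E}_{d,m}=\pi_*(\omega_\pi\otimes\ev_{m,0}^{*}L^{\vee})$ (your map $\ev_{m,0}^{*}L\to\omega_\pi$ is just the dual formulation of the paper's $T_\pi\to\ev_{m,0}^{*}L^{\vee}$), proving local freeness from $h^{1}(C,\omega_C\otimes f^{*}L^{\vee})=h^{0}(C,f^{*}\mathcal{O}(-2))=0$ via base change, and computing the rank $2d-1$ to get the expected dimension $2n(d+1)-1+m$. The only differences are that you supply details the paper delegates to citations --- the tree-propagation argument for the vanishing $h^{0}(C,f^{*}\mathcal{O}(-2))=0$ and the direct construction of the section in place of the reference to Altman--Kleiman --- which is a harmless (indeed welcome) elaboration, not a different method.
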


\begin{proof}
We follow the approach of \cite[3.2]{levcovitz2011symplectic}. Let
$(C,f,p_{1},\ldots,p_{m})$ be a stable map. By Proposition \ref{prop:EqDefs}
$(C,f,p_{1},\ldots,p_{m})$ is contact if and only if the map $T_{C}\rightarrow f^{*}T_{\mathbb{P}^{2n+1}}$
factors through $f^{*}\mathcal{D}$, i.e., the composition $T_{C}\rightarrow f^{*}L^{\vee}$
is the zero map. The locus of such maps is the locus where the map
$T_{\pi}\rightarrow\ev_{n,0}^{*}L^{\vee}$ vanishes along the whole
fiber of $\pi$ over $(C,f,p_{1},\ldots,p_{m})$. The sheaf $\mathcal{E}_{d,m}$
is locally free by \cite[III.12.9]{MR0463157} as $h^{1}(C,\omega_{C}\otimes f^{*}L^{\vee})=h^{0}(C,f^{*}L)=0$.
It follows that the locus of contact stable maps is given by a section
of $\mathcal{E}_{d,m}$ by \cite[(2.3)]{altman1977foundations}. Note
that the argument in Levcovitz-Vainsencher is made for $m=0$. In
the case $m>0$ we may have some component $C'\subset C$ contracted
by $f$. But in that case the map $T_{C'}\rightarrow f^{*}T_{\mathbb{P}^{2n+1}}$
is zero, so the result is still valid.

Since the rank of $\mathcal{E}_{d,m}$ is $h^{0}(\mathbb{P}^{1},\omega_{\mathbb{P}^{1}}\otimes f^{*}L^{\vee})=h^{0}(\mathbb{P}^{1},\mathcal{O}_{\mathbb{P}^{1}}(2d-2))=2d-1$,
the expected dimension of $\CSM_{m}(d)$ is $\dim\overline{M}_{0,m}(\mathbb{P}^{2n+1},d)-(2d-1)=2n(d+1)-1+m$.
\end{proof}
The following results are useful for enumerative applications.
\begin{lem}
\label{lem:ExpDim}The moduli space $\CSM_{m}(d)$ is purely of the
expected dimension.
\end{lem}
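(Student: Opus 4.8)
The plan is to combine the zero-locus description of Theorem~\ref{thm:Moduli} with a stratification of $\CSM_m(d)$ by the topological type of the domain, using Proposition~\ref{prop:KL} as the seed of an induction on $d$. Since $\CSM_m(d)$ is the zero scheme of a section of the rank-$(2d-1)$ bundle $\mathcal{E}_{d,m}$ on the smooth stack $\overline{\mathcal{M}}_{0,m}(\mathbb{P}^{2n+1},d)$, every irreducible component automatically has dimension at least $\dim\overline{M}_{0,m}(\mathbb{P}^{2n+1},d)-(2d-1)=2n(d+1)-1+m$. Thus only the reverse inequality needs proof, and it suffices to bound $\dim\leq 2n(d+1)-1+m$ on each stratum. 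Using the forgetful morphism $\CSM_m(d)\to\CSM_0(d)$, of pure relative dimension $m$ (its fibres are the stabilised $m$-fold fibred powers of the universal curve), I would first reduce to $m=0$, so that purity for $\CSM_0(d)$ yields purity for all $m$.

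On the open stratum of irreducible domains I would identify $\CSM_0(d)^{\mathrm{irr}}$ with the quotient stack $[\mathring{\mathscr{M}}_{n,d}/\mathrm{PGL}_2]$: a point of $\mathring{\mathscr{M}}_{n,d}\subset\mathbb{P}(V)$ is a parametrized contact morphism, and forgetting the parametrization is precisely the action of $\mathrm{PGL}_2=\mathrm{Aut}(\mathbb{P}^1)$, which has finite stabilizers by stability. By Proposition~\ref{prop:KL} we have $\dim\mathring{\mathscr{M}}_{n,d}=2n(d+1)+2$, so this stratum has dimension $2n(d+1)+2-3=2n(d+1)-1$, exactly the expected value. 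The base case $d=1$ is then the observation that $SG(2,2n+2)$ has dimension $4n-1=2n(1+1)-1$.

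For the boundary strata I would induct on $d$. As being contact is a local condition, a map with reducible domain is contact if and only if its restriction to each component is; hence a stratum whose domain has components of degrees $d_1,\dots,d_k$ (with $\sum d_i=d$ and each $d_i<d$) is the image, under a finite gluing morphism, of a fibre product $\CSM_{m_1}(d_1)\times_{\mathbb{P}^{2n+1}}\cdots\times_{\mathbb{P}^{2n+1}}\CSM_{m_k}(d_k)$ taken over the node evaluation maps $\tilde{\ev}$, where $m_i$ records the marks and nodes carried by the $i$-th component. Granting the inductive bound $\dim\CSM_{m_i}(d_i)\leq 2n(d_i+1)-1+m_i$ on each factor, a direct count — in which each of the $k-1$ nodes removes the $2n+1$ dimensions of $\mathbb{P}^{2n+1}$ — returns exactly $2n(d+1)-1+m$.

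The main obstacle is that this last count is not formal: a fibre product can exceed the transverse dimension when the evaluation maps degenerate, and the zero-locus structure only ever supplies lower bounds on dimension. The point that removes the difficulty is the homogeneity of the contact structure. The group $\mathrm{Sp}(2n+2,\mathbb{C})$ acts transitively on $\mathbb{P}^{2n+1}$ preserving the contact distribution, hence acts on each $\CSM_{m_i}(d_i)$ compatibly with $\tilde{\ev}$. The image of $\tilde{\ev}$ is a non-empty $\mathrm{Sp}$-invariant subset, so it is all of $\mathbb{P}^{2n+1}$; and since any two fibres are carried into one another by the group, $\tilde{\ev}$ is surjective with fibres all of the same dimension. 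For surjective equidimensional maps the fibre product attains precisely the transverse dimension $\sum_i\dim\CSM_{m_i}(d_i)-(k-1)(2n+1)$, so the count closes and every boundary stratum has dimension at most $2n(d+1)-1+m$. Combined with the main stratum and the automatic lower bound, this proves purity and completes the induction.
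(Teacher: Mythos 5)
Your proposal is correct and follows essentially the same route as the paper's proof: reduction to $m=0$ via the forgetful maps, the automatic lower bound from the zero-locus description of Theorem~\ref{thm:Moduli}, the Kobak--Loo space $\mathring{\mathscr{M}}_{n,d}$ modulo $\mathrm{Aut}(\mathbb{P}^{1})$ on the stratum of irreducible domains, and induction on $d$ through fiber products over $\mathbb{P}^{2n+1}$ on the boundary. Your one genuine addition is the $\mathrm{Sp}(2n+2,\mathbb{C})$-homogeneity argument making the node evaluation maps surjective with equidimensional fibers, so that the boundary fiber products have \emph{exactly} the transverse dimension $\sum_{i}\dim\CSM_{m_{i}}(d_{i})-(k-1)(2n+1)$ --- a point the paper's dimension count asserts without justification, and which your argument supplies.
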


\begin{proof}
Let $X=\mathbb{P}^{2n+1}$. If $\f:\overline{M}_{0,m+1}(X,d)\rightarrow\overline{M}_{0,m}(X,d)$
is the forgetful morphism of the last mark, we see that for every
$m\ge0$ we have $\CSM_{m+1}(d)=\f^{-1}(\CSM_{m}(d))$. Indeed, let
$(D,g,p_{1},\ldots,p_{m},p_{m+1})\in\overline{M}_{0,m+1}(X,d)$ and
\[
\f((D,g,p_{1},\ldots,p_{m},p_{m+1}))=(C,f,p_{1},\ldots,p_{m})\in\overline{M}_{0,m}(X,d).
\]
By construction, there exists a map $\hat{\f}:D\rightarrow C$ such
that $g=f\circ\hat{\f}$ and $\hat{\f}$ is, in every component of
$D$, either a contraction or an isomorphism. If $s$ is any local
section of $L$, $g^{*}s$ vanishes at some point if and only if $\hat{\f}^{*}(f^{*}s)$
vanishes. It follows that $(D,g,p_{1},\ldots,p_{m},p_{m+1})$ is contact
if and only if $(C,f,p_{1},\ldots,p_{m})$ is contact. Since all fibers
of the forgetful map are connected and $1$-dimensional, it follows
that if $\CSM_{0}(d)$ is purely of the expected dimension for some
$d$, then the same holds for $\CSM_{m}(d)$ for all $m\ge0$.

We will prove the theorem by induction on $d$. We know that $\CSM_{0}(1)$
is purely of the expected dimension by Example \ref{exa:SL}.

Since $\CSM_{m}(d)$ is the zero section of a vector bundle, each
component is at least of the expected dimension. Consider the moduli
space $\mathring{\mathscr{M}}_{n,d}$ of Proposition \ref{prop:KL}.
By the universal property of $\overline{M}_{0,0}(X,d)$, there exists
a map $\mathring{\mathscr{M}}_{n,d}\rightarrow\overline{M}_{0,0}(X,d)$
which sends a contact morphism $f:\mathbb{P}^{1}\rightarrow X$ to
the contact stable map $(\mathbb{P}^{1},f)$. We denote the image
by $\CSM_{0}(d)^{\circ}$. It is clear that $\CSM_{0}(d)^{\circ}$
is the scheme of all contact stable maps with irreducible domain,
i.e., $\CSM_{0}(d)^{\circ}=\CSM_{0}(d)\cap M_{0,0}(X,d)$. Moreover,
the fiber of any point $(\mathbb{P}^{1},f)\in\CSM_{0}(d)^{\circ}$
is the orbit 
\begin{equation}
\{\varphi\circ f:\mathbb{P}^{1}\rightarrow X\,/\,\forall\varphi\in\mathrm{Aut}(\mathbb{P}^{1})\}.\label{eq:Orbit}
\end{equation}
Note that (\ref{eq:Orbit}) is isomorphic to $\mathrm{Aut}(\mathbb{P}^{1})$
when $(\mathbb{P}^{1},f)$ is automorphisms-free. Otherwise, it is
a quotient of $\mathrm{Aut}(\mathbb{P}^{1})$ by a finite group. It
follows that the dimension of each component of $\CSM_{0}(d)^{\circ}$
is at most $2n(d+1)+2-3=2n(d+1)-1$, as expected. So, the closure
of $\CSM_{0}(d)^{\circ}$ is purely of the expected dimension.

The space $\CSM_{0}(d)$ may have components supported in the complement
of $M_{0,0}(X,d)$. That complement is the union of boundary divisors
of the form
\begin{equation}
\overline{M}_{0,1}(X,d_{1})\times_{X}\overline{M}_{0,1}(X,d_{2}),\label{eq:strata}
\end{equation}
for $d_{1}$ and $d_{2}$ positive integers such that $d_{1}+d_{2}=d$.
Since the property of being contact is a local condition, a stable
map contained in the variety of display (\ref{eq:strata}) is contact
if and only if each component is contact. It follows that the moduli
space of contact stable maps contained in (\ref{eq:strata}) is the
fiber product:
\[
\CSM_{1}(d_{1})\times_{X}\CSM_{1}(d_{2}).
\]
By induction, it has dimension:
\begin{eqnarray*}
2n(d_{1}+1)+2n(d_{2}+1)-\dim X & = & 2n(d_{1}+d_{2}+1)+2n-2n-1\\
 & = & 2n(d+1)-1.
\end{eqnarray*}
That is, the same as $\CSM_{0}(d)^{\circ}$. So $\CSM_{0}(d)$ is
purely of the expected dimension.
\end{proof}
There exist results in other contexts similar to Theorem \ref{thm:Moduli}
and Lemma \ref{lem:ExpDim}. See \cite[Proposition 5.1]{bryant1991two}
and \cite[2.3\&2.4]{YeArxiv}.
\begin{thm}
Let $\Gamma_{1},\ldots,\Gamma_{m}$ be subvarieties of $\mathbb{P}^{2n+1}$,
such that $\sum_{i=1}^{m}\mathrm{codim}(\Gamma_{i})=2n(d+1)-1+m$.
Let $G$ be the group of invertible complex matrices of order $2n+2$.
Then for a general $\sigma=(g_{1},\ldots,g_{m})\in G^{\times m}$,
the scheme-theoretic intersection
\begin{equation}
\CSM_{m}(d)\cap\ev_{1}^{-1}(g_{1}\Gamma_{1})\cap\cdots\cap\ev_{m}^{-1}(g_{m}\Gamma_{m})\label{eq:Intersection}
\end{equation}
is a finite number of reduced points supported in $\overline{M}_{0,m}^{*}(\mathbb{P}^{2n+1},d)$.
Further, the number of points in this intersection is
\begin{equation}
\int_{\overline{M}_{0,m}(\mathbb{P}^{2n+1},d)}\ev_{1}^{*}(\Gamma_{1})\cdots\ev_{m}^{*}(\Gamma_{m})\cdot\ctop(\mathcal{E}_{d,m}).\label{eq:The1}
\end{equation}
\end{thm}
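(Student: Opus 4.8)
The plan is to realize the intersection as a single fibre of the combined evaluation map and then invoke Kleiman's transversality theorem. Write $Y:=(\mathbb{P}^{2n+1})^{\times m}$ and let $\tilde{\ev}:=(\tilde{\ev}_{1},\ldots,\tilde{\ev}_{m})\colon\CSM_{m}(d)\to Y$ be the combined evaluation. If $\sigma=(g_{1},\ldots,g_{m})$ acts on $Y$ coordinate-wise, then the intersection (\ref{eq:Intersection}) is exactly $\tilde{\ev}^{-1}(\sigma\cdot(\Gamma_{1}\times\cdots\times\Gamma_{m}))$. Since $G=GL_{2n+2}$ acts transitively on $\mathbb{P}^{2n+1}$, the product $G^{\times m}$ acts transitively on $Y$, and as we work over $\mathbb{C}$ Kleiman's theorem applies. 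By Lemma \ref{lem:ExpDim} the source is pure of dimension $2n(d+1)-1+m$, while $\dim Z=\dim Y-\sum_{i}\mathrm{codim}(\Gamma_{i})$, so the expected dimension of the translated fibre product is $0$; hence for general $\sigma$ the intersection is finite.

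For reducedness and for the support statement I would stratify $\CSM_{m}(d)$. On the open locus $W:=\CSM_{m}(d)\cap\overline{M}_{0,m}^{*}(\mathbb{P}^{2n+1},d)$ the ambient stack is a smooth scheme, and $W$ is again pure of the expected dimension, so its smooth locus $W^{\mathrm{sm}}$ is dense. Applying the characteristic-zero part of Kleiman's theorem to $\tilde{\ev}|_{W^{\mathrm{sm}}}$ and to the smooth locus of $Z$ yields, for general $\sigma$, a transverse, hence reduced, $0$-dimensional intersection inside $W^{\mathrm{sm}}\subseteq\overline{M}_{0,m}^{*}$. The remaining pieces are the singular locus $W\setminus W^{\mathrm{sm}}$, a proper closed subset of each component and thus of strictly smaller dimension, and the locus $\Sigma:=\CSM_{m}(d)\setminus\overline{M}_{0,m}^{*}$ of stable maps with a nontrivial automorphism. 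For both, the dimension part of Kleiman's theorem forces the general translated fibre to be empty as soon as the dimension is strictly below the expected one.

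The crux is therefore the estimate $\dim\Sigma<2n(d+1)-1+m$, and this is the main obstacle. A nontrivial automorphism $\varphi$ of a marked stable map acts as a deck transformation on the components it fixes (so those are multiple covers and $\varphi$ has at most two fixed points on each) and permutes the remaining ones; in every case each marked point of $C$ must lie at a fixed point of $\varphi$ or at a node. Thus the generic stratum of $\CSM_{m}(d)$ acquires one free parameter per mark, whereas on $\Sigma$ the marks are pinned, and since the codimension equation forces $m\ge1$ this is exactly what pushes $\Sigma$ below the expected dimension. The delicate point is that the underlying unmarked symmetric locus need not be small by itself: already double covers of contact lines in $\mathbb{P}^{3}$ fill a component of full expected dimension (compare $\dim\CSM_{0}(e)^{\circ}+2$ for $k$-fold covers of degree-$e$ curves against $2n(d+1)-1$), so only the pinning of marked points saves the bound. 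One must combine this with the boundary bookkeeping of Lemma \ref{lem:ExpDim}, writing the boundary strata as fibre products $\CSM_{1}(d_{1})\times_{X}\CSM_{1}(d_{2})$ and analysing where a symmetric component can be attached, to verify the drop in dimension uniformly.

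Finally, I would identify the resulting count with (\ref{eq:The1}). The stack $\overline{\mathcal{M}}_{0,m}(\mathbb{P}^{2n+1},d)$ is smooth since the target is convex, and by Theorem \ref{thm:Moduli} together with Lemma \ref{lem:ExpDim} the substack $\CSM_{m}(d)$ is the zero locus of a section of $\mathcal{E}_{d,m}$ of pure codimension equal to $\mathrm{rk}\,\mathcal{E}_{d,m}$; the section is therefore regular and $[\CSM_{m}(d)]=\ctop(\mathcal{E}_{d,m})\cap[\overline{\mathcal{M}}_{0,m}(\mathbb{P}^{2n+1},d)]$. Because $G$ is connected, each translate $g_{i}\Gamma_{i}$ represents the same class as $\Gamma_{i}$, so (\ref{eq:The1}) is independent of $\sigma$ and equals $\int_{\CSM_{m}(d)}\tilde{\ev}_{1}^{*}(\Gamma_{1})\cdots\tilde{\ev}_{m}^{*}(\Gamma_{m})$. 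For general $\sigma$ this degree is computed by the finite reduced intersection produced above; and since that intersection lies in $\overline{M}_{0,m}^{*}$, where the stack is a scheme with trivial automorphisms, the degree equals the number of points, as claimed.
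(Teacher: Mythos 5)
Your proposal takes essentially the same route as the paper's proof: Kleiman--Bertini transversality applied to the combined evaluation map $\CSM_{m}(d)\rightarrow(\mathbb{P}^{2n+1})^{\times m}$, with purity of the expected dimension (Lemma \ref{lem:ExpDim}) giving finiteness, the smoothness half of Kleiman giving reducedness on the smooth locus, and the identification $j_{*}[\CSM_{m}(d)]=\ctop(\mathcal{E}_{d,m})$ from Theorem \ref{thm:Moduli} converting the count into (\ref{eq:The1}). The one step you expand on --- excluding the automorphism locus by stratified dimension estimates, correctly observing that multiple covers can fill components of full expected dimension so that the forced condition $m\ge1$ (and the pinning of marks and attaching nodes) is what saves the bound --- is exactly the step the paper delegates to \cite{Tho98} and \cite[Lemma 14]{FP}, so your sketch is, if anything, more explicit there than the paper itself.
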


\begin{proof}
We may apply the celebrated Kleiman-Bertini Theorem \cite{Kle}: Let
$G$ be a connected algebraic group, and $X$ a homogeneous $G$-variety,
$Y$ and $Z$ varieties mapping to $X$:
\[
\xymatrix{ & Z\ar[d]^{\beta}\\
Y\ar[r]^{\alpha} & X
}
\]
For $\sigma\in G$, denote by $Y^{\sigma}$ the image of the composition
$Y\stackrel{\alpha}{\rightarrow}X\stackrel{\sigma}{\rightarrow}X$.
Then
\begin{enumerate}
\item There is a dense open subscheme $G^{\circ}$ of $G$ such that, for
$\sigma\in G^{\circ}$, $Y^{\sigma}\times_{X}Z$ is either empty,
or of pure dimension
\[
\dim Y+\dim Z-\dim X.
\]
\item Further, if $Y$ and $Z$ are nonsingular, then $G^{\circ}$ can be
found so that $Y^{\sigma}\times_{X}Z$ is nonsingular.
\end{enumerate}
Let us denote $X=\mathbb{P}^{2n+1}$, which is a homogeneous $G$-variety
with respect to the group of invertible matrices of order $2n+2$.
If we apply Kleiman-Bertini to the diagram
\[
\xymatrix{ & \CSM_{m}(d)\ar[d]^{(\tilde{\ev}_{1}\times\cdots\times\tilde{\ev}_{m})}\\
\Gamma_{1}\times\cdots\times\Gamma_{m}=:\Gamma\ar[r] & X^{\times m}
}
\]
we deduce that, for a general $\sigma\in G^{\times m}$, $\Gamma^{\sigma}\times_{X^{\times m}}\CSM_{m}(d)$
is either empty, or of dimension $0$. We used that $\CSM_{m}(d)$
has the expected dimension. We assume, without loss of generality,
that $\Gamma^{\sigma}$ is smooth by applying again Kleiman-Bertini.
In the same way we may prove\footnote{Details can be found in \cite[Lemma p. 5]{Tho98} or \cite[Lemma 14]{FP}.}
that the intersection in (\ref{eq:Intersection}) is supported in
the intersection between $\overline{M}_{0,m}^{*}(X,d)$ and the smooth
part of $\CSM_{m}(d)$.

So, when $\Gamma^{\sigma}\times_{X^{\times m}}\CSM_{m}(d)$ is not
empty, it is reduced of dimension $0$. Standard arguments of intersection
theory tell us that (\ref{eq:Intersection}) is the degree of the
zero cycle
\begin{equation}
\int_{\CSM_{m}(d)}\tilde{\ev}_{1}^{*}(\Gamma_{1})\cdots\tilde{\ev}_{m}^{*}(\Gamma_{m}),\label{eq:The2}
\end{equation}
in the Chow ring of $\CSM_{m}(d)$. The equivalence between (\ref{eq:The1})
and (\ref{eq:The2}) follows from the fact that $j_{*}[\CSM_{m}(d)]=\ctop(\mathcal{E}_{d,m})$
by Theorem \ref{thm:Moduli}.
\end{proof}
\begin{cor}
\label{cor:Enum}Let $\Gamma_{1},\ldots,\Gamma_{m}$ be subvarieties
of $\mathbb{P}^{2n+1}$ in general position, such that $\sum_{i=1}^{m}\mathrm{codim}(\Gamma_{i})=2n(d+1)-1+m$.
Then the number of contact curves in $\mathbb{P}^{2n+1}$ of degree
$d$ meeting all $\Gamma_{1},\ldots,\Gamma_{m}$ is
\begin{equation}
\int_{\overline{M}_{0,m}(\mathbb{P}^{2n+1},d)}\ev_{1}^{*}(\Gamma_{1})\cdots\ev_{m}^{*}(\Gamma_{m})\cdot\ctop(\mathcal{E}_{d,m}).\label{eq:Enum}
\end{equation}
\end{cor}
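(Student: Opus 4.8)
The plan is to read the corollary off from the theorem above, whose scheme-theoretic conclusion I must translate into a count of honest curves. First I would note that the integral in (\ref{eq:Enum}) is literally the one in (\ref{eq:The1}), so by the theorem above it equals the number of reduced points of the intersection (\ref{eq:Intersection}) for a general $\sigma=(g_{1},\ldots,g_{m})\in G^{\times m}$. As $G$ is connected, the general-position configurations of the $\Gamma_{i}$ form a connected dense open set on which this number of reduced points is constant; hence I may assume the given $\Gamma_{i}$ are already of the form $g_{i}\Gamma_{i}$ for such a general $\sigma$, and it remains only to match the reduced points of (\ref{eq:Intersection}) bijectively with the contact curves of degree $d$ meeting all the $\Gamma_{i}$.

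For the forward direction I would take a reduced point, i.e.\ a contact stable map $(C,f,p_{1},\ldots,p_{m})$ which by the theorem lies in $\overline{M}_{0,m}^{*}(\mathbb{P}^{2n+1},d)$ and in the smooth part of $\CSM_{m}(d)$, with $f(p_{i})\in\Gamma_{i}$ for all $i$. I would then argue that for general incidence conditions such a map has irreducible domain and is birational onto its image: contracted or multiply covered components, together with the boundary strata (\ref{eq:strata}), sweep out loci whose generic members cannot satisfy all of the incidence conditions at once. Granting this, $\overline{C}:=f(C)$ is a rational contact curve of degree $d$ (contact by Proposition \ref{prop:EqDefs}, since the tangent directions of the image lie in $\mathcal{D}$) passing through each $\Gamma_{i}$ at the point $f(p_{i})$.

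For the converse I would start from a degree-$d$ contact curve $\overline{C}$ meeting all the $\Gamma_{i}$ and recover a unique point of (\ref{eq:Intersection}). General position forces each intersection $\overline{C}\cap\Gamma_{i}$ to be a single reduced point, so the marks $p_{i}$ are determined, and the normalization of $\overline{C}$ decorated with these marks is the unique contact stable map with image $\overline{C}$ and $f(p_{i})\in\Gamma_{i}$. Combining the two directions yields a bijection, whence the number of contact curves equals the number of reduced points of (\ref{eq:Intersection}), namely (\ref{eq:Enum}).

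The step I expect to be the main obstacle is the irreducibility and birationality claim of the second paragraph. It is delicate precisely because, as Lemma \ref{lem:ExpDim} shows, the boundary strata (\ref{eq:strata}) already have the full dimension $2n(d+1)-1$ of the main component, so they cannot be discarded on dimensional grounds alone; instead one must use the genericity of the $g_{i}\Gamma_{i}$ to rule out that any reducible, contracted, or multiply covered contact stable map satisfies all the incidence conditions simultaneously, and to guarantee uniqueness of the markings so that no curve is overcounted.
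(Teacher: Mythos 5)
Your reduction to the preceding theorem, and your points about automorphism-freeness and uniqueness of the markings, are fine; but the central claim of your second paragraph --- that genericity of the $g_{i}\Gamma_{i}$ rules out reducible domains --- is false, and no refinement of it can work. As you yourself half-observe, the boundary components $\CSM_{1}(d_{1})\times_{\mathbb{P}^{2n+1}}\CSM_{1}(d_{2})$ (and the longer fiber products over partitions of $d$) have the \emph{full} expected dimension by Lemma \ref{lem:ExpDim}; Kleiman--Bertini applied to each such component therefore produces a finite and in general \emph{nonzero} number of reduced points of (\ref{eq:Intersection}) lying on it. These are honest solutions: a general point of such a component is a stable map birational onto a connected \emph{reducible} contact curve whose image really does meet all $m$ constraints. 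The paper makes this explicit in the Remark immediately following the corollary (``all of them have non trivial contribution to (\ref{eq:The1})''), and the numerics confirm it: $N_{3}(1,3)=12$ counts $9$ reducible contact cubics (unions of lines) together with only $3$ smooth ones. So the bijection you propose --- with irreducible curves birational onto their image --- would compute a strictly smaller number than (\ref{eq:Enum}); the corollary has to be read with ``contact curves'' including connected reducible ones, and your proof of the forward direction establishes the wrong matching.

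What genericity \emph{does} exclude is the other degenerate behavior you lumped together with reducibility: multiple covers and contracted components. There the image curve has degree $d'<d$, so its moduli are too small to meet $\sum_{i}(\mathrm{codim}\,\Gamma_{i}-1)=2n(d+1)-1$ general incidence conditions (e.g.\ the family of contact lines has dimension $4n-1<2n(d+1)-1$ for $d\ge2$), and hence those loci, even when of full dimension inside $\CSM_{m}(d)$, meet the general translated cycles in the empty set; this is the content of the footnote's appeal to \cite[Lemma 14]{FP} in the theorem's proof. The paper's own proof of the corollary is accordingly much shorter than your plan: after replacing each $\Gamma_{i}$ by $g_{i}\Gamma_{i}$ (general position being preserved under a general $\sigma\in G^{\times m}$), it uses only that the solution maps are automorphism-free to identify them with their image curves $f(C)$, making no irreducibility claim at all.
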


\begin{proof}
The zero cycle (\ref{eq:Enum}) equals the number of contact stable
maps $(C,f,p_{1},\ldots,p_{m})$ satisfying $f(p_{i})\in g_{i}\Gamma_{i}$
for some general $g_{i}$. The property of being in general position
is preserved by the action of a general $\sigma\in G^{\times m}$,
so we can substitute all $\Gamma_{i}$ with $g_{i}\Gamma_{i}$. Since
these stable maps are automorphisms-free, the set of all curves $f(C)\subset\mathbb{P}^{2n+1}$
is the set of all contact curves meeting all $\Gamma_{1},\ldots,\Gamma_{m}$.
\end{proof}
\begin{rem}
We have seen that $\CSM_{m}(d)$ is purely of the expected dimension,
but it is far from being irreducible. Indeed, the components of $\CSM_{m}(d)$
are nasty fiber products
\[
\CSM_{m_{1}}(d_{1})\times_{\mathbb{P}^{2n+1}}\cdots\times_{\mathbb{P}^{2n+1}}\CSM_{m_{k}}(d_{k}),
\]
where $(d_{1},\ldots,d_{k})$ is a partition of $d$. Since all of
them have the same dimension, all of them have non trivial contribution
to (\ref{eq:The1}). So Equation (\ref{eq:The1}) considers all irreducible
contact curves as well as all reducible ones.
\end{rem}

We can reduce to the case $m=0$ thanks to the following
\begin{lem}
\label{lem:WoutMarks}Let $\Gamma_{1},\ldots,\Gamma_{m}$ be subvarieties
of $X=\mathbb{P}^{2n+1}$, such that $\sum_{i=1}^{m}\mathrm{codim}(\Gamma_{i})=2n(d+1)-1+m$.
Let $\f:\overline{M}_{0,1}(X,d)\rightarrow\overline{M}_{0,0}(X,d)$
be the forgetful map. Then
\begin{equation}
\begin{array}{c}
\int_{\overline{M}_{0,m}(X,d)}\ev_{1}^{*}(\Gamma_{1})\cdots\ev_{m}^{*}(\Gamma_{m})\cdot\ctop(\mathcal{E}_{d,m})\\
\,\,\,\,\,\,\,\,\,\,\,\,\,\,\,\,\,\,\,\,\,\,\,\,\,\,\,\,\,\,\,\,\,\,\,\,\,\,\,\,\,\,\,\,\,\,\,\,\,\,\,\,\,\,=\int_{\overline{M}_{0,0}(X,d)}\f_{*}(\ev_{1}^{*}(\Gamma_{1}))\cdots\f_{*}(\ev_{1}^{*}(\Gamma_{m}))\cdot\ctop(\mathcal{E}_{d,0})
\end{array}\label{eq:WoutMarks}
\end{equation}
\end{lem}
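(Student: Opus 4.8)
The plan is to peel off the marked points one at a time by pushing forward along the forgetful morphisms and applying the projection formula, for which I will need two inputs: that $\ctop(\mathcal{E}_{d,m})$ is pulled back from $\overline{M}_{0,0}(X,d)$, and a base-change identity for the evaluation class of the point being forgotten. For the first input, let $\rho\colon\overline{M}_{0,m}(X,d)\to\overline{M}_{0,0}(X,d)$ be the total forgetful morphism and let $c$ be the induced contraction of universal curves covering $\rho$. The components contracted by $c$ are rational bubbles on which $f$ is constant, so the relative dualizing sheaf satisfies $c_*\omega=\omega$ while $f^*L^\vee$ is pulled back along $c$; by the projection formula the fiberwise cohomology $H^0(C,\omega_C\otimes f^*L^\vee)$ is unchanged under stabilization, and (using $h^1=0$ as in Theorem \ref{thm:Moduli}, so that the pushforward commutes with base change) one checks $\mathcal{E}_{d,m}\cong\rho^*\mathcal{E}_{d,0}$, hence $\ctop(\mathcal{E}_{d,m})=\rho^*\ctop(\mathcal{E}_{d,0})$. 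Applying the projection formula to $\rho$ then gives
\[
\int_{\overline{M}_{0,m}(X,d)}\prod_{i=1}^m\ev_i^*(\Gamma_i)\cdot\ctop(\mathcal{E}_{d,m})=\int_{\overline{M}_{0,0}(X,d)}\rho_*\Big(\prod_{i=1}^m\ev_i^*(\Gamma_i)\Big)\cdot\ctop(\mathcal{E}_{d,0}),
\]
so it remains to identify $\rho_*\big(\prod_i\ev_i^*\Gamma_i\big)$ with $\prod_i\f_*(\ev_1^*\Gamma_i)$.

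I would prove $\rho_*\big(\prod_{i=1}^m\ev_i^*(\Gamma_i)\big)=\prod_{i=1}^m\f_*(\ev_1^*(\Gamma_i))$ by induction on $m$, factoring $\rho$ as $\rho_{m-1}\circ\f_m$, where $\f_m$ forgets the last point and $\rho_{m-1}$ forgets the remaining $m-1$. Since evaluation at the surviving points is compatible with forgetting the last one, one has $\ev_i^*(\Gamma_i)=\f_m^*(\ev_i^*(\Gamma_i))$ for $i<m$, so the projection formula isolates the factor $\f_{m,*}(\ev_m^*\Gamma_m)$. Identifying $\overline{M}_{0,m}(X,d)$ with the universal curve over $\overline{M}_{0,m-1}(X,d)$, under which $\ev_m$ becomes the universal stable map, this factor is a relative pushforward $\pi_*(F^*\Gamma_m)$. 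The crucial step is the base-change identity $\f_{m,*}(\ev_m^*\Gamma_m)=\rho_{m-1}^*\big(\f_*(\ev_1^*\Gamma_m)\big)$: I would obtain it by comparing the universal curve over $\overline{M}_{0,m-1}(X,d)$ with the flat pullback $\rho_{m-1}^*$ of the universal curve over $\overline{M}_{0,0}(X,d)$ through the contraction $c$, which has $c_*1=1$, together with flat and proper base change for the cartesian square defining that pullback. Feeding this back into the projection formula and invoking the inductive hypothesis yields the claimed product, and combining with the display above proves the lemma.

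The step I expect to be the main obstacle is the base-change identity $\f_{m,*}(\ev_m^*\Gamma_m)=\rho_{m-1}^*\f_*(\ev_1^*\Gamma_m)$, precisely because the universal curve over $\overline{M}_{0,m-1}(X,d)$ is not literally the pullback of the universal curve over $\overline{M}_{0,0}(X,d)$: forgetting marked points forces a stabilization that contracts the unstable rational bubbles, so the naive square is not cartesian. The work is to check that the contraction $c$ is birational onto an irreducible, reduced target of the correct dimension, so that $c_*1=1$, and that $F$ factors through $c$; granting this, the genuine cartesian square obtained after the contraction lets flat and proper base change finish the argument. The same contraction, applied to $\omega_\pi\otimes\ev_{m,0}^*L^\vee$, is what underlies the identification $\mathcal{E}_{d,m}\cong\rho^*\mathcal{E}_{d,0}$ used in the first input.
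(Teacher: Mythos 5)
Your proposal is correct, and in outline it is the paper's proof: both arguments reduce to the unmarked space by pushing forward along forgetful maps, combining the projection formula with the identification $\mathcal{E}_{d,m}\cong\rho^{*}\mathcal{E}_{d,0}$ and with the matching of the pushed-forward incidence classes. Where you genuinely diverge is at the base-change step, and there your version is the more careful one. The paper treats only $m=1$ (declaring the general case a repetition), asserts that the square formed by the universal curves $\mathcal{U}_{1}\rightarrow\mathcal{U}_{0}$ over $\f$ is ``clearly Cartesian'', reads off $\tilde{\f}^{*}\omega_{\pi}=\omega_{\pi'}$ and $\f^{*}\mathcal{E}_{d,0}=\mathcal{E}_{d,1}$, and concludes by Vistoli's push-forward lemma for stacks plus the projection formula. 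In fact that square is not Cartesian on the nose: at a point $(C,f,p_{1})$ where $p_{1}$ lies on an $f$-contracted component carrying exactly two nodes, the fiber of $\pi'$ is $C$ while the fiber of the pulled-back family is the stabilization, so the two families differ exactly along the locus you flag. Your repair --- factoring the comparison map through the contraction $c$ of unstable rational bubbles, checking that $Rc_{*}$ leaves $H^{0}(C,\omega_{C}\otimes f^{*}L^{\vee})$ unchanged (the bubbles are rational and $f^{*}L^{\vee}$ is trivial on them) to obtain $\mathcal{E}_{d,m}\cong\rho^{*}\mathcal{E}_{d,0}$, and using $c_{*}1=1$ together with flat and proper base change on the genuine Cartesian square to obtain $\f_{m,*}(\ev_{m}^{*}\Gamma_{m})=\rho_{m-1}^{*}\f_{*}(\ev_{1}^{*}\Gamma_{m})$ --- is precisely what makes the paper's one-line claim rigorous; you also carry out the induction on $m$ explicitly, including the compatibility $\ev_{i}\circ\f_{m}=\ev_{i}$ for $i<m$, which the paper compresses into ``repeat the same argument''. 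The only ingredient of the paper's proof missing from yours is bookkeeping between stacks and coarse spaces: the paper computes on $\overline{\mathcal{M}}_{0,m}(X,d)$ and then uses that the relevant zero cycle is supported in the automorphism-free locus to transfer the equality to $\overline{M}_{0,m}(X,d)$; working with rational coefficients on the coarse spaces, as you implicitly do, this is harmless but should be stated.
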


\begin{proof}
Suppose $m=1$ and consider the natural diagram of stacks
\[
\xymatrix{\mathcal{U}_{1}\ar[d]_{\pi'}\ar[r]^{\tilde{\f}} & \mathcal{U}_{0}\ar[d]^{\pi}\ar[r]^{\ev_{0,0}} & X\\
\overline{\mathcal{M}}_{0,1}(X,d)\ar[r]^{\f} & \overline{\mathcal{M}}_{0,0}(X,\beta)
}
\]
where $\tilde{\f}$ is the natural map such that $\ev_{1,0}=\ev_{0,0}\circ\tilde{\f}$.
The square is clearly Cartesian, so that $\tilde{\f}^{*}\omega_{\pi}=\omega_{\pi'}$
and $\f^{*}\mathcal{E}_{d,0}=\mathcal{E}_{d,1}$. By \cite[(3.9)Lemma]{vistoli1989intersection}
and projection formula we have: 
\begin{eqnarray*}
\int_{\overline{\mathcal{M}}_{0,1}(X,d)}\ev_{1}^{*}(\Gamma_{1})\cdot\ctop(\mathcal{E}_{d,1}) & = & \f_{*}\int_{\overline{\mathcal{M}}_{0,1}(X,d)}\ev_{1}^{*}(\Gamma_{1})\cdot\ctop(\f^{*}\mathcal{E}_{d,0})\\
 & = & \f_{*}\int_{\f^{*}[\overline{\mathcal{M}}_{0,0}(X,d)]}\ev_{1}^{*}(\Gamma_{1})\cdot\f^{*}\ctop(\mathcal{E}_{d,0})\\
 & = & \int_{\overline{\mathcal{M}}_{0,0}(X,d)}\f_{*}(\ev_{1}^{*}(\Gamma_{1}))\cdot\ctop(\mathcal{E}_{d,0})
\end{eqnarray*}
Since the intersection
\[
\int_{\overline{\mathcal{M}}_{0,1}(X,d)}\ev_{1}^{*}(\Gamma_{1})\cdot\ctop(\mathcal{E}_{d,1})
\]
is supported in the automorphisms-free locus, we get Eq. (\ref{eq:WoutMarks}).
In order to prove the general case, it is enough to repeat the same
argument.
\end{proof}

\section{Equivariant classes in $\mathbb{P}^{n}$}

Let us recall \cite[Chapter 9]{cox1999mirror}. The standard action
of $T=(\mathbb{C}^{*})^{n+1}$ on $\mathbb{P}^{n}$ has $n+1$ fixed
points $\{q_{i}\}_{i=0}^{n}$, and $n(n+1)/2$ coordinate (invariant)
lines. We fix an isomorphism
\[
H^{*}(BT)\cong\mathbb{C}[\lambda_{0},\ldots,\lambda_{n}],
\]
where $\lambda_{i}$ is the corresponding weight of the $T$-action
on $\mathcal{O}_{\mathbb{P}^{n}}(-1)_{|q_{i}}$. There is an induced
$T$-action on $\overline{M}_{0,m}(\mathbb{P}^{n},d)$, with fixed
points given by stable maps $(C,f,p_{1},\ldots,p_{m})$ such that
each component of $C$ is either mapped by $f$ to some $q_{i}$ or
it is the cover of a coordinate line. Moreover marked points, nodes
of $C$ and ramification points of $f$ are mapped to fixed points.
In particular, if $C'$ is a component of $C$ and a degree $d'$
cover of a coordinate line via $f$, then locally $f$ is represented
by homogeneous coordinates $(x_{0},x_{1})\mapsto(x_{0}^{d'},x_{1}^{d'})$.

For each fixed stable map $(C,f,p_{1},\ldots,p_{m})$ we associate
a colored tree\footnote{A tree is a connected, undirected, simple graph.}
$\Gamma$. The vertices $v$ are in one-to-one correspondence with
the connected components $C_{v}$ of $f^{-1}(\{q_{0},\ldots,q_{n}\})$.
Each vertex $v$ is labeled by the point $q_{v}\in\{q_{i}\}_{i=0}^{n}$
such that $f(C_{v})=q_{v}$. Moreover, the edges correspond to those
irreducible components mapped by $f$ onto some coordinate line. We
associate to each edge $e$ the degree $d_{e}$ of the cover it represents.
We define the set $S_{v}$ consisting of those indexes $i\in\{1,\ldots,m\}$
for which $p_{i}\in C_{v}$. An isomorphism of such trees is an isomorphism
of graphs preserving the labels $q_{v},S_{v},d_{e}$.

The connected components of the fixed point locus of $\overline{M}_{0,m}(\mathbb{P}^{n},d)$
are parameterized by all possible trees $\Gamma$, modulo isomorphisms.
For each equivariant vector bundle $\mathcal{V}$ on $\overline{\mathcal{M}}_{0,m}(\mathbb{P}^{n},d)$,
and each polynomial $P(\mathcal{V})$ in the Chern classes of $\mathcal{V}$,
we will denote by $P^{T}(\mathcal{V})$ the corresponding equivariant
polynomial of $P(\mathcal{V})$. For example, $\ctopT(\mathcal{V})$
is the equivariant Euler class of $\mathcal{V}$. We use the following
notation: 
\begin{itemize}
\item $i_{\Gamma}:\overline{\mathcal{M}}_{\Gamma}\hookrightarrow\overline{\mathcal{M}}_{0,m}(\mathbb{P}^{n},d)$
is the substack of fixed maps with corresponding graph $\Gamma$,
\item $P^{T}(\mathcal{V})(\Gamma)=i_{\Gamma}^{*}\left(P^{T}(\mathcal{V})\right)$
is the pull back in equivariant cohomology 
\[
i_{\Gamma}^{*}:H_{T}^{*}(\overline{\mathcal{M}}_{0,m}(\mathbb{P}^{n},d))\otimes\mathbb{C}(\lambda_{0},\ldots,\lambda_{n})\rightarrow H_{T}^{*}(\overline{\mathcal{M}}_{\Gamma})\otimes\mathbb{C}(\lambda_{0},\ldots,\lambda_{n}),
\]
\item $G(d)$ is the set of all possible trees, modulo isomorphisms, corresponding
to fixed maps,
\item $V_{\Gamma}$ is the set of all vertices of $\Gamma$,
\item $E_{\Gamma}$ is the set of all edges of $\Gamma$,
\item for each $v\in V_{\Gamma}$, $\lambda_{v}\in\mathbb{C}[\lambda_{0},\ldots,\lambda_{n}]$
is the weight of the $T$-action on $\mathcal{O}_{\mathbb{P}^{n}}(-1)_{|q_{v}}$,
\item for each $e\in E_{\Gamma}$, $d_{e}$ and $(e(i),e(j))$ are, respectively,
the degree of the cover corresponding to $e$ and the pair of labels
of its vertices.
\end{itemize}
In order to compute the number of contact curves, we will use Bott
formula (see \cite{bott1967residue,kontsevich1995enumeration}):
\begin{equation}
\int_{\overline{\mathcal{M}}_{0,m}(\mathbb{P}^{n},d)}P(\mathcal{V})=\sum_{\Gamma\in G(d)}\frac{1}{a_{\Gamma}}\frac{P^{T}(\mathcal{V})(\Gamma)}{\ctopT(N_{\Gamma})(\Gamma)},\label{eq:Bott}
\end{equation}
where $N_{\Gamma}$ is the normal bundle of $\overline{\mathcal{M}}_{\Gamma}$,
and
\[
a_{\Gamma}:=|\mathrm{Aut}(\Gamma)|\cdot\prod_{e\in E_{\Gamma}}d_{e},
\]
where $|\mathrm{Aut}(\Gamma)|$ is the order of the group of automorphisms
of $\Gamma$.

Once we can compute $P^{T}(\mathcal{V})(\Gamma)$ and $a_{\Gamma}$,
we can apply formula (\ref{eq:Bott}) using a computer. In the case
$m=0$ (so, no marked points), in order to find $G(d)$ we have to
find all trees colored with $\{q_{0},\ldots,q_{n}\}$, and where each
edge $e$ has a weight $d_{e}\ge1$ such that $\sum_{e}d_{e}=d$.
For example, $G(1)$ contains the following trees:

{
\centering
\begin {tikzpicture}[auto ,node distance =2 cm,on grid , thick , state/.style ={ text=black }] 
\node[state] (A){$q_i$}; 
\node[state] (B) [right =of A] {$q_j$}; 
\path (A) edge node{$1$} (B); 
\node [] at (4.5,0.1) {$0\le i < j\le n,\,a_\Gamma =1.$}; 
\end{tikzpicture}
\par}

On the other hand, the set $G(2)$ contains the following trees:

{
\centering
\begin {tikzpicture}[auto ,node distance =2 cm,on grid , thick , state/.style ={  text=black }] 
\node[state] (A){$q_i$}; 
\node[state] (B) [right =of A] {$q_j$}; 
\node[state] (C) [right =of B] {$q_k$}; 
\path (A) edge node{$1$} (B); 
\path (B) edge node{$1$} (C); 
\node [ ] at (6 ,0.2) {$0\le i < k\le n$}; 
\node [below] at (6.5,0) {$0\le j\le n,\,j\neq i,k,\,a_\Gamma =1.$};
\end{tikzpicture} 

\begin {tikzpicture}[auto ,node distance =2 cm,on grid , thick , state/.style ={ text=black }] 
\node[state] (A){$q_i$}; 
\node[state] (B) [right =of A] {$q_j$}; 
\node[state] (C) [right =of B] {$q_i$}; 
\path (A) edge node{$1$} (B); 
\path (B) edge node{$1$} (C); 
\node [below] at (6.5 ,0.3) {$0\le i , j\le n,i\neq j,\,a_\Gamma =2.$};
\end{tikzpicture}

\begin {tikzpicture}[auto ,node distance =2 cm,on grid , thick , state/.style ={  text=black }] 
\node[state] (A){$q_i$}; 
\node[state] (B) [right =of A] {$q_j$};  
\path (A) edge node{$2$} (B);  
\node [] at (4.5 ,0) {$0\le i < j\le n,\,a_\Gamma =2.$}; 
\end{tikzpicture}
\par}

We will use Bott formula only in the case without marked points thanks
to Lemma \ref{lem:WoutMarks}. Since we are interested in the number
of contact curves meeting a certain number of linear subspaces, we
need to compute both the equivariant classes $\ctopT(\mathcal{E}_{d,0})$
and the equivariant class of the cycle of curves meeting a linear
subspace.

\subsection{Contact curves}

We will denote $\mathcal{E}_{d,0}$ by $\mathcal{E}_{d}$. The fiber
of $\mathcal{E}_{d}$ at a point $(C,f)$ of $\overline{M}_{0,0}(\mathbb{P}^{n},d)$
is the vector space $H^{0}(C,\omega_{C}\otimes f^{*}\mathcal{O}_{\mathbb{P}^{n}}(2))$,
where $\omega_{C}$ is the dualizing sheaf of $C$. There exists a
canonical isomorphism $H^{0}(C,\omega_{C}\otimes f^{*}\mathcal{O}_{\mathbb{P}^{n}}(2))\cong H^{1}(C,f^{*}\mathcal{O}_{\mathbb{P}^{n}}(-2))^{\vee}$.
So the weights of $H^{0}(C,\omega_{C}\otimes f^{*}\mathcal{O}_{\mathbb{P}^{n}}(2))$
are the weights of $H^{1}(C,f^{*}\mathcal{O}_{\mathbb{P}^{n}}(-2))$
multiplied by $(-1)$. 
\begin{lem}
\label{lem:AdditivityContact}Let $\Gamma\in G(d)$. Suppose that
there exist two trees $\Gamma_{1}\in G(d_{1})$, $\Gamma_{2}\in G(d_{2})$
such that $\Gamma$ is the union of $\Gamma_{1}$ and $\Gamma_{2}$
at a vertex $v$. Then
\begin{equation}
\ctopT(\mathcal{E}_{d})(\Gamma)=\ctopT(\mathcal{E}_{d_{1}})(\Gamma_{1})\,\ctopT(\mathcal{E}_{d_{2}})(\Gamma_{2})\,(2\lambda_{v}).\label{eq:AdditivityContact}
\end{equation}
\end{lem}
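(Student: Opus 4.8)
The plan is to compute the restriction of the equivariant Euler class to the fixed locus $\overline{\mathcal{M}}_{\Gamma}$ by analysing the fibre of $\mathcal{E}_{d}$ over a fixed map with graph $\Gamma$, using the Serre-duality identification recorded just above the statement. Write $(C,f)$ for a fixed stable map with graph $\Gamma$ and set $\mathcal{L}:=f^{*}\mathcal{O}_{\mathbb{P}^{2n+1}}(-2)$. Since $\Gamma$ is the union of $\Gamma_{1}$ and $\Gamma_{2}$ at the vertex $v$, the domain decomposes as $C=C_{1}\cup C_{2}$, where $C_{i}$ is the domain of a fixed map with graph $\Gamma_{i}$ and the two pieces are glued at a single node $x$ lying over the fixed point $q_{v}$. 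The fibre of $\mathcal{E}_{d}$ is $H^{0}(C,\omega_{C}\otimes f^{*}\mathcal{O}(2))\cong H^{1}(C,\mathcal{L})^{\vee}$, so it suffices to understand $H^{1}(C,\mathcal{L})$ as a $T$-representation and then dualise.

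First I would apply the partial normalisation sequence at $x$. Tensoring $0\to\mathcal{O}_{C}\to\mathcal{O}_{C_{1}}\oplus\mathcal{O}_{C_{2}}\to\mathbb{C}_{x}\to 0$ with the locally free sheaf $\mathcal{L}$ gives $0\to\mathcal{L}\to\mathcal{L}|_{C_{1}}\oplus\mathcal{L}|_{C_{2}}\to\mathcal{L}_{x}\to 0$, where $\mathcal{L}_{x}$ is the fibre of $\mathcal{L}$ at $x$. The key observation is that all the relevant $H^{0}$ groups vanish: on every edge component $\mathcal{L}$ has negative degree $-2d_{e}$, so a global section must vanish on all edges, and connectedness together with the fact that each vertex is incident to an edge (as $d_{i}\ge 1$) forces it to vanish on the contracted components as well. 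Hence $H^{0}(C,\mathcal{L})=H^{0}(C_{1},\mathcal{L}|_{C_{1}})=H^{0}(C_{2},\mathcal{L}|_{C_{2}})=0$, and the long exact cohomology sequence collapses to the short exact sequence of $T$-representations
\[
0\to \mathcal{L}_{x}\to H^{1}(C,\mathcal{L})\to H^{1}(C_{1},\mathcal{L}|_{C_{1}})\oplus H^{1}(C_{2},\mathcal{L}|_{C_{2}})\to 0.
\]

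Now I would read off the weights. Because $x$ maps to $q_{v}$, the line $\mathcal{L}_{x}$ is $\mathcal{O}(-2)|_{q_{v}}$, which carries weight $2\lambda_{v}$; and $H^{1}(C_{i},\mathcal{L}|_{C_{i}})$ is exactly the dual of the fibre of $\mathcal{E}_{d_{i}}$ over the corresponding point of $\overline{\mathcal{M}}_{\Gamma_{i}}$. Passing to equivariant Euler classes turns the additivity of characters in the displayed sequence into multiplicativity: dualising and taking the product of weights yields $\ctopT(\mathcal{E}_{d})(\Gamma)=\ctopT(\mathcal{E}_{d_{1}})(\Gamma_{1})\,\ctopT(\mathcal{E}_{d_{2}})(\Gamma_{2})\,(2\lambda_{v})$, the extra factor being precisely the contribution of the node fibre. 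I expect the main obstacle to be bookkeeping rather than conceptual: one must check that the vertex $v$ (which belongs to both $\Gamma_{1}$ and $\Gamma_{2}$) contributes a contracted component to only one of the two pieces, so that the single node over $q_{v}$ accounts for the one extra factor and nothing is double-counted; and one must track the Serre-duality sign together with the weight convention for $\mathcal{O}(-1)|_{q_{v}}$ carefully so that the node factor comes out as $2\lambda_{v}$.
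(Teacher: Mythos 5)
Your proposal is correct and follows essentially the same route as the paper: both split the fixed map at the node over $q_{v}$, tensor the normalization sequence $0\to\mathcal{O}_{C}\to\mathcal{O}_{C_{1}}\oplus\mathcal{O}_{C_{2}}\to\mathbb{C}_{x}\to0$ with $f^{*}\mathcal{O}_{\mathbb{P}^{2n+1}}(-2)$, and extract the factor $2\lambda_{v}$ from the skyscraper term in the resulting short exact sequence of $H^{1}$'s after dualizing. Your explicit verification of the $H^{0}$-vanishing (which the paper leaves implicit, having noted $h^{0}(C,f^{*}L)=0$ in the proof of Theorem \ref{thm:Moduli}) and your flagged bookkeeping about the contracted component at $v$ are minor refinements of the same argument, not a different one.
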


\begin{proof}
Let $(C,f)$ be a fixed stable map corresponding to $\Gamma$. Let
$(C_{1},f_{1},q'_{1})$ and $(C_{2},f_{2},q'_{2})$ be fixed stable
maps corresponding to, respectively, $\Gamma_{1}$ and $\Gamma_{2}$
such that the marked point is in $C_{v}$. In particular, $f_{1}(q'_{1})=f_{2}(q'_{2})=q_{v}$.
The standard gluing map 
\[
\overline{M}_{0,1}(\mathbb{P}^{n},d_{1})\times_{\mathbb{P}^{n}}\overline{M}_{0,1}(\mathbb{P}^{n},d_{2})\rightarrow\overline{M}_{0,0}(\mathbb{P}^{n},d)
\]
sends $((C_{1},f_{1},q'_{1}),(C_{2},f_{2},q'_{2}))$ to $(C,f)$.
Let us consider the normalization exact sequence tensorized by $\mathcal{O}_{\mathbb{P}^{n}}(-2)$
(see, e.g. \cite[pag 88]{arbarello2011geometry}):
\[
0\rightarrow f^{*}\mathcal{O}_{\mathbb{P}^{n}}(-2)\rightarrow f_{1}^{*}\mathcal{O}_{\mathbb{P}^{n}}(-2)\oplus f_{2}^{*}\mathcal{O}_{\mathbb{P}^{n}}(-2)\rightarrow\mathcal{O}_{\mathbb{P}^{n}}(-2)_{|q_{v}}\rightarrow0.
\]
If we take the dual of the exact sequence given by the spaces of global
sections, we get the following short exact sequence:
\begin{eqnarray*}
0\rightarrow & H^{1}(C_{1},f_{1}^{*}\mathcal{O}_{\mathbb{P}^{n}}(-2))^{\vee}\oplus H^{1}(C_{2},f_{2}^{*}\mathcal{O}_{\mathbb{P}^{n}}(-2))^{\vee} & \rightarrow\\
\rightarrow & H^{1}(C,f^{*}\mathcal{O}_{\mathbb{P}^{n}}(-2))^{\vee}\rightarrow H^{0}(q_{v},\mathcal{O}_{\mathbb{P}^{n}}(-2)_{|q_{v}})^{\vee} & \rightarrow0.
\end{eqnarray*}
This immediately implies Eq. (\ref{eq:AdditivityContact}).
\end{proof}
\begin{prop}
\label{prop:ClassE}Let $\Gamma\in G(d)$ be a tree. Then
\[
\ctopT(\mathcal{E}_{d})(\Gamma)=\left(\prod_{e\in E_{\Gamma}}\prod_{\alpha=1}^{2d_{e}-1}\frac{\alpha\lambda_{e(i)}+(2d_{e}-\alpha)\lambda_{e(j)}}{d_{e}}\right)\prod_{v\in V_{\Gamma}}(2\lambda_{v})^{\mathrm{val}(v)-1}
\]
where $\mathrm{val}(v)$ is the number of edges connected to $v$.
\end{prop}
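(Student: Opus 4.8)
The plan is to argue by induction on the number of edges of $\Gamma$, using the additivity result of Lemma \ref{lem:AdditivityContact} as the inductive engine and the one-edge tree as the base case. For the base case, let $\Gamma$ consist of a single edge $e$ of degree $d:=d_e$ joining the vertices labeled $i:=e(i)$ and $j:=e(j)$. Then $\overline{\mathcal{M}}_\Gamma$ is a point, and $i_\Gamma^{*}\mathcal{E}_d$ is just the $T$-representation $H^0(\mathbb{P}^1,\omega_{\mathbb{P}^1}\otimes f^{*}\mathcal{O}(2))\cong H^1(\mathbb{P}^1,f^{*}\mathcal{O}(-2))^{\vee}$ attached to the degree-$d$ cover $f\colon[s:t]\mapsto[\cdots:s^{d}:\cdots:t^{d}:\cdots]$ with $f([1:0])=q_i$, $f([0:1])=q_j$. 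First I would record the weights of the homogeneous coordinates: since $f^{*}x_i=s^{d}$ and $f^{*}x_j=t^{d}$, and $x_i,x_j$ carry the weights dual to $\mathcal{O}(-1)_{|q_i},\mathcal{O}(-1)_{|q_j}$, the coordinates $s,t$ acquire the fractional weights $\pm\lambda_i/d,\pm\lambda_j/d$. A \v{C}ech (equivalently, Laurent-monomial) basis of $H^1(\mathbb{P}^1,\mathcal{O}(-2d))$ is then indexed by the \emph{interior} monomials $s^{-\alpha}t^{-(2d-\alpha)}$ for $\alpha=1,\ldots,2d-1$, whose weights are $\tfrac{\alpha\lambda_i+(2d-\alpha)\lambda_j}{d}$. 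Multiplying these $2d-1$ weights (and invoking the sign identification between $H^0(\omega\otimes f^{*}\mathcal{O}(2))$ and $H^1(f^{*}\mathcal{O}(-2))$ noted before the lemma) reproduces the edge factor $\prod_{\alpha=1}^{2d-1}\tfrac{\alpha\lambda_i+(2d-\alpha)\lambda_j}{d}$; the absence of the two end-monomials matches the rank $2d-1$ of $\mathcal{E}_d$, and since both vertices of a single edge have valence $1$ there is no $(2\lambda_v)$ factor to account for.

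For the inductive step, if $\Gamma$ has at least two edges it possesses a vertex $v$ of valence $\ge 2$. I would detach one edge $e$ meeting $v$ and write $\Gamma=\Gamma_1\cup\Gamma_2$ as the union at $v$ of the single edge $\Gamma_2=\{e\}$ and the remaining subtree $\Gamma_1$. Lemma \ref{lem:AdditivityContact} then gives $\ctopT(\mathcal{E}_d)(\Gamma)=\ctopT(\mathcal{E}_{d_1})(\Gamma_1)\,\ctopT(\mathcal{E}_{d_e})(\Gamma_2)\,(2\lambda_v)$. Substituting the inductive expression for $\Gamma_1$ and the base case for $\Gamma_2$, the edge products simply concatenate because $E_{\Gamma_1}\sqcup\{e\}=E_\Gamma$, and the only vertex whose exponent needs checking is $v$: in $\Gamma_1$ its valence is $\mathrm{val}_\Gamma(v)-1$, so $\Gamma_1$ contributes $(2\lambda_v)^{\mathrm{val}_\Gamma(v)-2}$, and the explicit $(2\lambda_v)$ supplied by the lemma promotes this to $(2\lambda_v)^{\mathrm{val}_\Gamma(v)-1}$, while every other vertex keeps its valence unchanged. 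This exactly reproduces the claimed products over $E_\Gamma$ and $V_\Gamma$.

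The hard part will be the base-case weight computation rather than the induction: one must pin down the normalizing factor $1/d_e$ coming from the ramification of the cover $[s:t]\mapsto[s^{d_e}:t^{d_e}]$, the precise linear interpolation $\alpha\lambda_{e(i)}+(2d_e-\alpha)\lambda_{e(j)}$ of the $2d_e-1$ weights, and the signs relating $H^0(\omega_{\mathbb{P}^1}\otimes f^{*}\mathcal{O}(2))$, its Serre dual, and the equivariant Euler class (these signs depend on the chosen convention for $\lambda_i$ on $\mathcal{O}(-1)_{|q_i}$ and must be matched against Lemma \ref{lem:AdditivityContact}). Once the single-edge representation is computed correctly, everything else is formal: the inductive step is pure combinatorial bookkeeping of valences, so I expect no further obstacle there.
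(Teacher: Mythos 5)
Your proposal is correct and follows essentially the same route as the paper's proof: the base case identifies the fiber of $\mathcal{E}_{d}$ over a single-edge fixed map with $H^{1}(\mathbb{P}^{1},f^{*}\mathcal{O}(-2))^{\vee}$, computes the weights of the \v{C}ech cocycle basis $z_{0}^{-\alpha}z_{1}^{-(2d-\alpha)}$ from the fractional weights $\lambda_{i}/d,\lambda_{j}/d$ of the coordinates (where the paper's two signs, from the negative exponents and from dualizing, cancel exactly as you anticipate in your sign discussion), and the general case follows by induction on edges via Lemma \ref{lem:AdditivityContact}. Your inductive step is in fact spelled out in more detail than the paper's one-line invocation of the lemma, and your valence bookkeeping at the gluing vertex is exactly what makes that line work.
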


Proposition \ref{prop:ClassE} was first proved in \cite[Proposição 3.1.3]{Eden}
with the following strategy. Let $f:\mathbb{P}^{1}\rightarrow\mathbb{P}^{3}$
be a stable curve of degree $d$. If we take the Euler exact sequence
of $\mathbb{P}^{1}$ \cite[II.8.13]{MR0463157} and tensor it by $\mathcal{O}_{\mathbb{P}^{1}}(2d)=f^{*}\mathcal{O}_{\mathbb{P}^{3}}(2)$,
we get the vector space $H^{0}(\mathbb{P}^{1},\omega_{\mathbb{P}^{1}}\otimes f^{*}\mathcal{O}_{\mathbb{P}^{3}}(2))$
as the kernel of the surjective map 
\begin{equation}
H^{0}(\mathbb{P}^{1},\mathcal{O}_{\mathbb{P}^{1}}(2d-1))^{\oplus2}\twoheadrightarrow H^{0}(\mathbb{P}^{1},\mathcal{O}_{\mathbb{P}^{1}}(2d)).\label{eq:Amorim}
\end{equation}
He found the weights of the action on $H^{0}(\mathbb{P}^{1},\omega_{\mathbb{P}^{1}}\otimes f^{*}\mathcal{O}_{\mathbb{P}^{3}}(2))$
from the weights of $H^{0}(\mathbb{P}^{1},\mathcal{O}_{\mathbb{P}^{1}}(2d-1))^{\oplus2}$
and $H^{0}(\mathbb{P}^{1},\mathcal{O}_{\mathbb{P}^{1}}(2d))$, and
from the map in (\ref{eq:Amorim}).
\begin{proof}[Proof of Proposition \ref{prop:ClassE}]
Suppose that $\Gamma=e$ is the graph with only one edge and labels
$(q_{i},q_{j})$ corresponding to a degree $d$ map $(\mathbb{P}^{1},f)$.
One can show that a basis for $H^{1}(\mathbb{P}^{1},f^{*}\mathcal{O}_{\mathbb{P}^{n}}(-2))$
is given by the \v{C}ech cocycles:
\begin{equation}
z_{0}^{-\alpha}z_{1}^{-(2d-\alpha)},1\le\alpha\le2d-1,\label{eq:cocycles}
\end{equation}
see \cite[III.5.5.1]{MR0463157}. We know that $x_{0},x_{1}\in H^{0}(\mathbb{P}^{1},\mathcal{O}_{\mathbb{P}^{1}}(1))$
have weights $\lambda_{i},\lambda_{j}$. Since $x_{k}=z_{k}^{d}$
via $f$, it follows that $z_{0},z_{1}$ have weights $\frac{\lambda_{i}}{d},\frac{\lambda_{j}}{d}$.
Hence the cocycles in (\ref{eq:cocycles}) have weights 
\[
(-1)\frac{\alpha\lambda_{i}+(2d-\alpha)\lambda_{j}}{d},1\le\alpha\le2d-1.
\]
So the equivariant Euler class of $\mathcal{E}_{d}$ is
\[
\prod_{\alpha=1}^{2d-1}(-1)(-1)\frac{\alpha\lambda_{i}+(2d-\alpha)\lambda_{j}}{d}=\prod_{\alpha=1}^{2d-1}\frac{\alpha\lambda_{i}+(2d-\alpha)\lambda_{j}}{d}.
\]
We can prove the proposition for any $\Gamma$ by induction on the
number of edges and using Lemma \ref{lem:AdditivityContact}. This
concludes the proof.
\end{proof}

\subsection{Incidence to a linear subspace}

The cycle of stable maps whose image meets a general codimension $r+1$
linear subspace of $\mathbb{P}^{n}$ is known to be $\pi_{*}(h^{r+1})$
where $h=c_{1}(\ev_{1}^{*}(\mathcal{O}_{\mathbb{P}^{n}}(1)))$. In
this section, we will represent $\pi_{*}(h^{r+1})$ as a linear combination
of polynomials in the Chern classes of vector bundles. It follows
that the equivariant class of $\pi_{*}(h^{r+1})$, which we denote
by $[\pi_{*}(h^{r+1})]^{T}$, is the linear combination of the equivariant
version of the same polynomials. At the very end the result will be
the following.
\begin{thm}
\label{thm:Equivariant-class-Linear}Let $\Gamma\in G(d)$. The equivariant
class for the incidence to a codimension $r+1$ linear subspace of
$\mathbb{P}^{n}$ in the component $\overline{\mathcal{M}}_{\Gamma}$
is
\[
\sum_{e\in E_{\Gamma}}d_{e}\sum_{t=0}^{r}\lambda_{e(i)}^{t}\lambda_{e(j)}^{r-t}.
\]
\end{thm}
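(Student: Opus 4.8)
The plan is to compute the restriction $[\pi_{*}(h^{r+1})]^{T}(\Gamma)=i_{\Gamma}^{*}[\pi_{*}(h^{r+1})]^{T}$ directly by equivariant localization along the fibres of the universal curve, instead of first rewriting $\pi_{*}(h^{r+1})$ globally as a polynomial in Chern classes. Let $\pi_{\Gamma}\colon\mathcal{U}_{\Gamma}\to\overline{\mathcal{M}}_{\Gamma}$ be the pullback of the universal family $\pi\colon\mathcal{U}_{0}\to\overline{\mathcal{M}}_{0,0}(\mathbb{P}^{n},d)$ along $i_{\Gamma}$; the resulting square is Cartesian and $\pi$ is flat. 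Writing $h=c_{1}(\ev_{0,0}^{*}\mathcal{O}_{\mathbb{P}^{n}}(1))$ equivariantly, base change for the proper flat map $\pi$ gives
\[
i_{\Gamma}^{*}[\pi_{*}(h^{r+1})]^{T}=(\pi_{\Gamma})_{*}\bigl(h^{r+1}|_{\mathcal{U}_{\Gamma}}\bigr).
\]
Everything thus reduces to an equivariant pushforward along a family of genus $0$ curves whose combinatorial type is pinned down by $\Gamma$.

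Next I would split $\mathcal{U}_{\Gamma}$ into irreducible components and use additivity of $(\pi_{\Gamma})_{*}$. As a cycle $[\mathcal{U}_{\Gamma}]=\sum_{v\in V_{\Gamma}}[\mathcal{U}_{v}]+\sum_{e\in E_{\Gamma}}[\mathcal{U}_{e}]$, where $\mathcal{U}_{v}$ is the family of contracted components over the vertex $v$ and $\mathcal{U}_{e}$ is the family of the degree $d_{e}$ cover attached to the edge $e$; the node sections are not components and contribute nothing extra. On $\mathcal{U}_{v}$ the evaluation map is the constant $q_{v}$, so $h^{r+1}|_{\mathcal{U}_{v}}=\lambda_{v}^{r+1}$ is pulled back from a point, and since $\mathcal{U}_{v}\to\overline{\mathcal{M}}_{\Gamma}$ has relative dimension $1$ the projection formula yields $(\pi_{v})_{*}(\lambda_{v}^{r+1})=\lambda_{v}^{r+1}(\pi_{v})_{*}(1)=0$ for degree reasons. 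Hence only the edge families survive, which is the structural reason the answer is a polynomial in the $\lambda$'s with no contribution from the (positive-dimensional) moduli of contracted components.

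For a single edge $e$ the component $C_{e}\cong\mathbb{P}^{1}$ is a rigid degree $d_{e}$ cover of the coordinate line joining $q_{e(i)}$ and $q_{e(j)}$, locally $(z_{0}:z_{1})\mapsto(z_{0}^{d_{e}}:z_{1}^{d_{e}})$; being rigid, the relevant part of $\mathcal{U}_{e}\to\overline{\mathcal{M}}_{\Gamma}$ is a product family, so $(\pi_{e})_{*}(h^{r+1}|_{\mathcal{U}_{e}})=\bigl(\int_{C_{e}}h^{r+1}\bigr)\cdot 1$ is a constant. I would evaluate the integral by Atiyah--Bott localization on $C_{e}$, whose $T$-action has two fixed points over $q_{e(i)}$ and $q_{e(j)}$, where $h$ restricts to $\lambda_{e(i)}$ and $\lambda_{e(j)}$ and where the degree $d_{e}$ cover rescales the tangent weights of the line to $\pm(\lambda_{e(i)}-\lambda_{e(j)})/d_{e}$. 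Summing the two contributions,
\[
\int_{C_{e}}h^{r+1}=d_{e}\left(\frac{\lambda_{e(i)}^{r+1}}{\lambda_{e(i)}-\lambda_{e(j)}}+\frac{\lambda_{e(j)}^{r+1}}{\lambda_{e(j)}-\lambda_{e(i)}}\right)=d_{e}\sum_{t=0}^{r}\lambda_{e(i)}^{t}\lambda_{e(j)}^{r-t},
\]
and adding over $e\in E_{\Gamma}$ gives the claimed formula.

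The main obstacle I expect is bookkeeping the equivariant conventions so that they are compatible with the previous section: one must fix the signs of $c_{1}^{T}(\mathcal{O}_{\mathbb{P}^{n}}(1))|_{q_{v}}$ and of the line's tangent weights consistently, and track how the degree $d_{e}$ cover rescales them, so that the telescoping $\tfrac{a^{r+1}-b^{r+1}}{a-b}=\sum_{t}a^{t}b^{r-t}$ comes out with the stated sign. A secondary point needing care is the justification that contracted-component families contribute nothing and that decomposing $[\mathcal{U}_{\Gamma}]$ into components introduces no spurious node terms; both follow once the relative dimensions are tracked, but they are exactly where a slip would occur.
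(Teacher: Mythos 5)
Your proof is correct, but it takes a genuinely different route from the paper's. The paper never localizes along the universal curve: it first shows, via Grothendieck--Riemann--Roch and Cramer's rule, that $\pi_{*}(h^{r+1})$ is a universal linear combination of the Chern characters $\ch_{r}(\pi_{*}\mathcal{L}(a))$ for $a=1,\ldots,r+1$; it then proves additivity over the graph by normalization exact sequences of sheaves --- where node terms $\ch_{r}^{T}(\pi_{*}\mathcal{L}(a))(v)=a^{r}\lambda_{v}^{r}$ genuinely appear and are killed only by the determinant Lemma \ref{lem:DetMatrixV} --- and finally it handles the one-edge case by listing the torus weights of $H^{0}(\mathbb{P}^{1},f^{*}\mathcal{O}_{\mathbb{P}^{n}}(a))$ and evaluating the resulting Cramer determinant through a Faulhaber-polynomial identity (Claim \ref{claim:p}). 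Your route --- base change along the universal curve to the fixed locus, splitting the fiber cycle into vertex and edge components, killing the vertex families for degree reasons since $(\pi_{v})_{*}(1)$ lands in negative degree, and evaluating $\int_{C_{e}}h^{r+1}$ by one-dimensional localization --- replaces all of that combinatorics by the telescoping identity $\frac{a^{r+1}-b^{r+1}}{a-b}=\sum_{t=0}^{r}a^{t}b^{r-t}$, and it explains structurally why no node corrections arise: you decompose the fundamental cycle rather than the sheaf, so there is no inclusion--exclusion term to cancel. What the paper's longer route buys is exactly its intermediate lemma that $\pi_{*}(h^{r+1})$ is a linear combination of Chern characters of equivariant vector bundles: the paper's statement of Bott's formula (\ref{eq:Bott}) is for polynomials $P(\mathcal{V})$ in Chern classes of equivariant bundles, so that expression is needed anyway before the incidence class may be fed into (\ref{eq:Bott}); your argument instead invokes the general Atiyah--Bott localization theorem for an arbitrary equivariant lift together with equivariant base change for the (flat, proper) universal curve, which is legitimate but should be stated. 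Two small points to pin down in a final write-up: work on the cover $\prod_{v}\overline{M}_{0,\mathrm{val}(v)}$ of $\overline{\mathcal{M}}_{\Gamma}$, where the edge families are honest product families (the automorphism and $d_{e}$-cover factors are absorbed into $a_{\Gamma}$ in (\ref{eq:Bott}), not into this restriction); and your sign conventions do match the paper's, as the consistency check $\int_{\mathrm{line}}h=\frac{\lambda_{i}}{\lambda_{i}-\lambda_{j}}+\frac{\lambda_{j}}{\lambda_{j}-\lambda_{i}}=1$ confirms, so the tangent weight $(\lambda_{e(i)}-\lambda_{e(j)})/d_{e}$ at the fixed point over $q_{e(i)}$ is the right one and the final sign is as stated.
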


See \cite[Proposição 3.1.2]{Eden} for the case $r=1$, with a similar
strategy.
\begin{lem}
\label{lem:DetMatrixV}Let $d,r$ be integers. Let $V$ be the matrix
of order $r+1$ whose coefficients are $(V)_{a,k}=a^{k}$. Let $p(x)=\sum_{i}p_{i}x^{i}$
be a polynomial of degree at most $r+1$ with $p(0)=0$. Let $V_{p(x)}$
be the matrix of order $r+1$ such that $(V_{p(x)})_{a,k}=(V)_{a,k}$
for $k\neq r+1$, and $(V_{p(x)})_{a,r+1}=p(ad)$. That is,
\[
V_{p(x)}:=\left(\begin{array}{ccccccc}
1 & 1^{2} & 1^{3} & 1^{4} & \ldots & 1^{r} & p(d)\\
2 & 2^{2} & 2^{3} & 2^{4} & \ldots & 2^{r} & p(2d)\\
\vdots & \vdots & \vdots & \vdots & \vdots & \vdots & \vdots\\
r+1 & (r+1)^{2} & (r+1)^{3} & (r+1)^{4} & \ldots & (r+1)^{r} & p((r+1)d)
\end{array}\right).
\]
Then $\det V_{p(x)}=d^{r+1}p_{r+1}\det V.$
\end{lem}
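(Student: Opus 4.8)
The plan is to exploit the observation that, once $p$ is expanded, the final column of $V_{p(x)}$ is a linear combination of the ``power columns'' of $V$, so that linearity of the determinant in that one column collapses the whole computation to a single surviving term.

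First I would use the hypothesis $p(0)=0$ together with $\deg p\le r+1$ to write $p(x)=\sum_{i=1}^{r+1}p_i x^i$, with no constant term. Substituting $x=ad$ gives, for each row index $a\in\{1,\dots,r+1\}$,
\[
p(ad)=\sum_{i=1}^{r+1}p_i\,d^{i}\,a^{i}.
\]
Let $c_i$ denote the column vector $(a^{i})_{a=1}^{r+1}$, which is exactly the $i$-th column of $V$ since $(V)_{a,i}=a^i$. Then the last column of $V_{p(x)}$, whose $a$-th entry is $p(ad)$, equals $\sum_{i=1}^{r+1}p_i\,d^{i}\,c_i$.

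Next I would invoke multilinearity of the determinant in its final column. Because $(V_{p(x)})_{a,k}=(V)_{a,k}$ for every $k\le r$, the first $r$ columns of $V_{p(x)}$ are precisely $c_1,\dots,c_r$, and only the $(r+1)$-st column has been altered. Expanding by linearity in that column yields
\[
\det V_{p(x)}=\sum_{i=1}^{r+1}p_i\,d^{i}\,\det\bigl(c_1,\dots,c_r,c_i\bigr).
\]
For each $i\le r$ the matrix $(c_1,\dots,c_r,c_i)$ has two identical columns (its $i$-th and its last column both equal $c_i$), so that determinant vanishes. Only the term $i=r+1$ survives, and there $(c_1,\dots,c_r,c_{r+1})=V$ by definition of $V$. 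This gives $\det V_{p(x)}=d^{r+1}\,p_{r+1}\,\det V$, as claimed.

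There is no serious obstacle here; the argument is a one-line consequence of linearity of the determinant. The only point demanding care is the bookkeeping of column indices, namely verifying that $c_{r+1}$ is genuinely the last column of $V$ (so that the $i=r+1$ summand reconstitutes $V$ exactly) and that the repeated-column cancellation is applied to the correct pair of columns for each $i\le r$.
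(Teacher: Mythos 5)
Your proof is correct and follows essentially the same route as the paper's: both exploit linearity of the determinant in the last column to reduce to monomials $p_i x^i$, observe that for $1\le i\le r$ the resulting last column duplicates (a scalar multiple of) the $i$-th power column so the determinant vanishes, and identify the surviving $i=r+1$ term as $d^{r+1}p_{r+1}\det V$. Your version merely writes out explicitly the multilinear expansion that the paper leaves implicit.
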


\begin{proof}
If $p(x)=p_{k}x^{k}$ is a monomial with $1\le k\le r$, we see that
the last column of $V_{p(x)}$ is a multiple of the $k^{\mathrm{th}}$
column, then $\det V_{p(x)}=0$. So can suppose that $p(x)=p_{r+1}x^{r+1}$,
and the result follows easily.
\end{proof}
\begin{lem}
The cycle $\pi_{*}(h^{r+1})$ is a linear combination of Chern characters
of equivariant vector bundles.
\end{lem}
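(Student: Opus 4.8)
The plan is to realise $\pi_{*}(h^{r+1})$ through Grothendieck--Riemann--Roch applied to the universal curve $\pi\colon\mathcal{U}_{0}\to\overline{M}_{0,0}(\mathbb{P}^{n},d)$. For each integer $a\ge 0$ set $\mathcal{L}_{a}:=\ev_{0,0}^{*}\mathcal{O}_{\mathbb{P}^{n}}(a)$, a $T$-equivariant line bundle on $\mathcal{U}_{0}$ with $c_{1}(\mathcal{L}_{a})=a\,h$, and put $\mathcal{F}_{a}:=\pi_{*}\mathcal{L}_{a}$. On every fibre of $\pi$ the sheaf $\mathcal{L}_{a}$ has nonnegative degree on each component of the (genus $0$) domain, so $h^{1}(C,\mathcal{L}_{a}|_{C})=0$ by \cite[III.5]{MR0463157}; hence $R^{1}\pi_{*}\mathcal{L}_{a}=0$ and each $\mathcal{F}_{a}$ is an honest $T$-equivariant vector bundle, of rank $ad+1$. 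First I would record the extreme case that anchors the computation: $\mathcal{F}_{0}=\pi_{*}\mathcal{O}_{\mathcal{U}_{0}}=\mathcal{O}_{\overline{M}_{0,0}}$, whose Chern character is $1$.

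Next I would apply Grothendieck--Riemann--Roch to $\pi$, in the form valid for the universal curve with its relative dualizing sheaf $\omega_{\pi}$. Writing $\td(\pi)=1+\tau_{1}+\tau_{2}+\cdots$ for the relative Todd contribution, one gets the polynomial-in-$a$ identity
\[
\ch(\mathcal{F}_{a})=\pi_{*}\!\left(e^{a h}\,\td(\pi)\right)=\sum_{m\ge 0}\frac{a^{m}}{m!}\,\pi_{*}\!\left(h^{m}\,\td(\pi)\right).
\]
The aim is to isolate the codimension-$r$ graded piece. Since $\pi$ has relative dimension $1$, the class $\pi_{*}(h^{m}\tau_{i})$ lives in codimension $m+i-1$, so
\[
\bigl[\ch(\mathcal{F}_{a})\bigr]_{r}=\sum_{m=0}^{r+1}\frac{a^{m}}{m!}\,\pi_{*}\!\left(h^{m}\,\tau_{r+1-m}\right),
\]
which is a polynomial in $a$ of degree $r+1$. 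Its leading coefficient is $\tfrac{1}{(r+1)!}\pi_{*}(h^{r+1})$ because $\tau_{0}=1$; and its constant term $\pi_{*}(\tau_{r+1})$ equals $[\ch\mathcal{F}_{0}]_{r}=0$ for $r\ge1$, since $R\pi_{*}\mathcal{O}_{\mathcal{U}_{0}}=\mathcal{O}$. Crucially, all the unknown Todd and node corrections $\tau_{\ge 1}$ are attached to the lower powers $a^{0},\dots,a^{r}$, so they never touch the top coefficient.

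Finally I would invert this Vandermonde-type dependence on $a$. Evaluating at $a=1,\dots,r+1$ produces a linear system whose coefficient matrix is the matrix $V$ of Lemma \ref{lem:DetMatrixV} (entries $a^{k}$) and whose right-hand sides are the equivariant classes $[\ch(\mathcal{F}_{a})]_{r}$. Solving by Cramer's rule and invoking Lemma \ref{lem:DetMatrixV} --- which computes precisely the determinant obtained by replacing the last column with the sampled values $p(ad)$ of a polynomial $p$ with $p(0)=0$, and shows it selects the top coefficient $p_{r+1}$ --- yields an explicit rational combination
\[
\pi_{*}(h^{r+1})=\sum_{a=1}^{r+1}c_{a}\,\bigl[\ch(\mathcal{F}_{a})\bigr]_{r},\qquad c_{a}\in\mathbb{Q}.
\]
Because each $\mathcal{F}_{a}$ is a $T$-equivariant vector bundle, this exhibits $\pi_{*}(h^{r+1})$ as a linear combination of Chern characters of equivariant bundles, and passing to the equivariant Chern characters is then immediate, as announced before Theorem \ref{thm:Equivariant-class-Linear}. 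The step demanding the most care is the Grothendieck--Riemann--Roch input: $\pi$ is a family of \emph{nodal} curves over a possibly singular Deligne--Mumford stack, so one must use the relative-dualizing-sheaf version and track the node contributions to $\td(\pi)$. The saving grace, which I would make explicit, is that these corrections enter only the coefficients of $a^{0},\dots,a^{r}$ and are annihilated by the interpolation, so the extraction of $\pi_{*}(h^{r+1})$ is insensitive to them.
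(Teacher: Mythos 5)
Your proof is correct and takes essentially the same route as the paper: vanishing of $R^{1}\pi_{*}\mathcal{L}(a)$ for $a\ge0$ so that each $\pi_{*}\mathcal{L}(a)$ is an equivariant vector bundle, Grothendieck--Riemann--Roch to express $\ch_{r}(\pi_{*}\mathcal{L}(a))$ as a degree-$(r+1)$ polynomial in $a$ whose constant term dies via the $a=0$ case ($\pi_{*}(\td(T_{\pi})_{i})=0$ for $i\ge1$), and Vandermonde/Cramer inversion at $a=1,\ldots,r+1$ to isolate the leading coefficient $\tfrac{1}{(r+1)!}\pi_{*}(h^{r+1})$. Your explicit tracking of the nodal Todd corrections, and the observation that they are attached only to the lower powers of $a$ and hence annihilated by the interpolation, is a useful clarification but does not alter the argument.
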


\begin{proof}
Let $a$ be an integer, consider the sheaf $\pi_{*}(\mathcal{L}(a))$
on $\overline{\mathcal{M}}_{0,0}(\mathbb{P}^{n},d)$, where $\mathcal{L}(a):=\ev_{0,0}^{*}(\mathcal{O}_{\mathbb{P}^{n}}(a))$.
Given a stable map $(C,f)$, using Riemann-Roch we can see that $h^{1}(C,f^{*}\mathcal{O}_{\mathbb{P}^{n}}(a))=0$
for every $a\ge0$, hence for those values of $a$ we have that $\pi_{*}(\mathcal{L}(a))$
is a vector bundle and $\pi_{!}(\mathcal{L}(a))=\pi_{*}(\mathcal{L}(a))$.
Let $\td(T_{\pi})$ be the Todd's class of the coherent sheaf $T_{\pi}$,
and denote by $\td(T_{\pi})_{k}$ its $k$-codimensional component.
We apply Grothendieck-Riemann-Roch Theorem (GRR) \cite[Chapter 18]{MR1644323}
(see also \cite[3E]{harris2006moduli})
\begin{eqnarray*}
\ch(\pi_{*}\mathcal{L}(a)) & = & \pi_{*}(\ch(\mathcal{L}(a))\cdot\td(T_{\pi}))\\
 & = & \pi_{*}\left(\left(1+ah+\frac{(ah)^{2}}{2!}+\ldots\right)\left(\td(T_{\pi})_{0}+\td(T_{\pi})_{1}+\td(T_{\pi})_{2}+\ldots\right)\right)\\
 & = & \pi_{*}\left(\left[\td(T_{\pi})_{0}\right]+\left[\td(T_{\pi})_{1}+ah\right]+\ldots\right).
\end{eqnarray*}
Moreover, since $\ch_{i}(\pi_{*}\mathcal{L}(0))=0$ for $i\ge1$,
by GRR
\begin{eqnarray*}
\ch(\pi_{*}\mathcal{L}(0)) & = & \pi_{*}(\ch(\mathcal{L}(0))\cdot\td(T_{\pi}))\\
 & = & \pi_{*}(\td(\mathcal{L}(0)\otimes T_{\pi}))\\
 & = & \pi_{*}(\td(T_{\pi})),
\end{eqnarray*}
it follows that $\pi_{*}(\td(T_{\pi})_{i})=0$ for $i\ge1$. The component
$\ch_{r}(\pi_{*}\mathcal{L}(a))$ is
\begin{eqnarray}
\ch_{r}(\pi_{*}\mathcal{L}(a)) & = & \underset{=0}{\underbrace{\pi_{*}(\td(T_{\pi})_{r+1})}}+\sum_{k=1}^{r+1}a^{k}\cdot\pi_{*}\left(\frac{\td(T_{\pi})_{r+1-k}h^{k}}{k!}\right)\nonumber \\
 & = & \sum_{k=1}^{r+1}a^{k}\cdot\pi_{*}\left(\frac{\td(T_{\pi})_{r+1-k}h^{k}}{k!}\right).\label{eq:LinearSys}
\end{eqnarray}
Now we use a little bit of linear algebra. In (\ref{eq:LinearSys}),
we consider $\pi_{*}\left(\frac{\td(T_{\pi})_{r+1-k}h^{k}}{k!}\right)$
as unknowns, so for $a=1,2,3,\ldots,r+1$ we get $r+1$ linearly independent
equations. We can apply Cramer's rule and get an explicit expression
\[
\frac{1}{(r+1)!}\pi_{*}(h^{r+1})=\frac{1}{\det V}\det\left(\begin{array}{ccccc}
1 & 1^{2} & \ldots & 1^{r} & \ch_{r}(\pi_{*}\mathcal{L}(1))\\
2 & 2^{2} & \ldots & 2^{r} & \ch_{r}(\pi_{*}\mathcal{L}(2))\\
\vdots & \vdots & \vdots & \vdots & \vdots\\
r+1 & (r+1)^{2} & \ldots & (r+1)^{r} & \ch_{r}(\pi_{*}\mathcal{L}(r+1))
\end{array}\right),
\]
where $V$ is the matrix of the linear system, i.e., $(V)_{a,k}=a^{k}$
for $a,k=1,\ldots,r+1$.
\end{proof}
\begin{lem}
\label{lem:AdditivityLinear}Let $\Gamma\in G(d)$. Suppose that there
exist two trees $\Gamma_{1}\in G(d_{1})$, $\Gamma_{2}\in G(d_{2})$
such that $\Gamma$ is the union of $\Gamma_{1}$ and $\Gamma_{2}$.
Then
\begin{equation}
[\pi_{*}(h^{r+1})]^{T}(\Gamma)=[\pi_{*}(h^{r+1})]^{T}(\Gamma_{1})+[\pi_{*}(h^{r+1})]^{T}(\Gamma_{2}).\label{eq:AdditivityLinear}
\end{equation}
\end{lem}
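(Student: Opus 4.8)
The plan is to reduce everything to the behaviour of the Chern characters $\ch_r^T(\pi_*\mathcal{L}(a))$ under the splitting $\Gamma=\Gamma_1\cup\Gamma_2$ and then feed the result into the Cramer expression for $\pi_*(h^{r+1})$ obtained in the previous lemma. Recall that for $a=1,\ldots,r+1$ one has
\[
[\pi_*(h^{r+1})]^T(\Gamma)=\frac{(r+1)!}{\det V}\,\det\Bigl(a^1\ \cdots\ a^r\ \ \ch_r^T(\pi_*\mathcal{L}(a))(\Gamma)\Bigr)_{a=1,\ldots,r+1},
\]
where the first $r$ columns are the tree-independent numerical entries $a^k$ and only the last column depends on $\Gamma$. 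Thus additivity of $[\pi_*(h^{r+1})]^T$ will follow from an additive decomposition of that last column, modulo a term that the determinant annihilates. Here $\det V$ depends only on $r$, so the same prefactor governs $\Gamma$, $\Gamma_1$ and $\Gamma_2$ simultaneously.

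First I would repeat the normalization argument of Lemma \ref{lem:AdditivityContact}, now with $\mathcal{O}_{\mathbb{P}^{n}}(a)$ in place of $\mathcal{O}_{\mathbb{P}^{n}}(-2)$ and $a\ge 0$. Writing $v$ for the gluing vertex, tensoring the normalization sequence of the domain by $f^*\mathcal{O}_{\mathbb{P}^{n}}(a)$ gives
\[
0\to f^*\mathcal{O}_{\mathbb{P}^{n}}(a)\to f_1^*\mathcal{O}_{\mathbb{P}^{n}}(a)\oplus f_2^*\mathcal{O}_{\mathbb{P}^{n}}(a)\to \mathcal{O}_{\mathbb{P}^{n}}(a)_{|q_v}\to 0.
\]
Since $a\ge 0$ forces $h^1$ to vanish on each component, pushing forward produces a short exact sequence of $T$-equivariant bundles on $\overline{\mathcal{M}}_\Gamma$; unlike the contact case, no dualization is needed. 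Restricting Chern characters to the fixed locus then yields
\[
\ch_r^T(\pi_*\mathcal{L}(a))(\Gamma)=\ch_r^T(\pi_*\mathcal{L}(a))(\Gamma_1)+\ch_r^T(\pi_*\mathcal{L}(a))(\Gamma_2)-\frac{(-a\lambda_v)^r}{r!},
\]
where the correction is $\ch_r^T$ of the one-dimensional $T$-representation $\mathcal{O}_{\mathbb{P}^{n}}(a)_{|q_v}$ of weight $-a\lambda_v$, and the $\Gamma_i$-terms are understood on $\overline{M}_{0,0}(\mathbb{P}^{n},d_i)$.

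Next I would substitute this into the Cramer determinant and use linearity of the determinant in its last column. The $\Gamma_1$- and $\Gamma_2$-pieces reassemble, by the very same formula, into $[\pi_*(h^{r+1})]^T(\Gamma_1)$ and $[\pi_*(h^{r+1})]^T(\Gamma_2)$, because the first $r$ columns are identical for all three trees. The residual determinant has last column $\left(-\frac{(-1)^r\lambda_v^r}{r!}\,a^r\right)_{a=1,\ldots,r+1}$, which is a scalar multiple of the $r$-th column $(a^r)_a$; two proportional columns force it to vanish. This is exactly the monomial case $1\le k\le r$ of Lemma \ref{lem:DetMatrixV}. Adding the three contributions gives Eq. (\ref{eq:AdditivityLinear}).

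The crux is the bookkeeping of the correction term: one must check that the degree-$r$ part of $\ch^T(\mathcal{O}_{\mathbb{P}^{n}}(a)_{|q_v})=e^{-a\lambda_v}$ is a pure monomial in $a$ of degree $r$ with $a$-independent coefficient, so that it coincides up to scale with a column already present in the Cramer matrix and is killed by the determinant. Once the vanishing $h^1(C,f^*\mathcal{O}_{\mathbb{P}^{n}}(a))=0$ for $a\ge 0$ is invoked, the exactness of the pushforward sequence and the additivity of the Chern character are formal, so this is the only delicate point.
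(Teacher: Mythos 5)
Your proposal is correct and takes essentially the same route as the paper: the normalization sequence tensored by $\mathcal{O}_{\mathbb{P}^{n}}(a)$ yields additivity of $\ch_{r}^{T}(\pi_{*}\mathcal{L}(a))$ up to the vertex correction term, which is proportional to $a^{r}$ with an $a$-independent coefficient and is therefore annihilated by the Cramer determinant, exactly the monomial case of Lemma \ref{lem:DetMatrixV} that the paper invokes. The only discrepancy is the sign convention for the weight of $\mathcal{O}_{\mathbb{P}^{n}}(a)_{|q_{v}}$ (you write $-a\lambda_{v}$ where the paper's usage gives $a\lambda_{v}$), which is immaterial since your argument only uses proportionality to the column $(a^{r})_{a}$.
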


\begin{proof}
We will argue like in Lemma \ref{lem:AdditivityContact}. Let $v$
be the common vertex of $\Gamma_{1}$ and $\Gamma_{2}$. From the
normalization exact sequence tensorized by $\mathcal{O}_{\mathbb{P}^{n}}(a)$,
we get the short exact sequence
\[
\begin{array}{c}
0\rightarrow H^{0}(C,f^{*}\mathcal{O}_{\mathbb{P}^{n}}(a))\rightarrow H^{0}(C_{1},f_{1}^{*}\mathcal{O}_{\mathbb{P}^{n}}(a))\oplus H^{0}(C_{2},f_{2}^{*}\mathcal{O}_{\mathbb{P}^{n}}(a))\\
\qquad\qquad\qquad\qquad\qquad\qquad\qquad\qquad\qquad\qquad\rightarrow H^{0}(q_{v},\mathcal{O}_{\mathbb{P}^{n}}(a)_{|q_{v}})\rightarrow0.
\end{array}
\]
This immediately implies that
\[
\ch_{r}^{T}(\pi_{*}\mathcal{L}(a))(\Gamma)=\ch_{r}^{T}(\pi_{*}\mathcal{L}(a))(\Gamma_{1})+\ch_{r}^{T}(\pi_{*}\mathcal{L}(a))(\Gamma_{2})-\ch_{r}^{T}(\pi_{*}\mathcal{L}(a))(v).
\]
Since $\pi_{*}(h^{r+1})$ is a linear combination of $\{\ch_{r}(\pi_{*}\mathcal{L}(a))\}_{a=1}^{r+1}$,
in order to get $[\pi_{*}(h^{r+1})]^{T}(\Gamma)$ we just need to
substitute in the expression of $\pi_{*}(h^{r+1})$ all $\ch_{r}(\pi_{*}\mathcal{L}(a))$
by $\ch_{r}^{T}(\pi_{*}\mathcal{L}(a))(\Gamma)$. Since
\[
\ch_{r}^{T}(\pi_{*}\mathcal{L}(a))(v)=a^{r}\lambda_{v}^{r},
\]
by Lemma \ref{lem:DetMatrixV} we see that $\ch_{r}^{T}(\pi_{*}\mathcal{L}(a))(v)$
has no influence on the value of $[\pi_{*}(h^{r+1})]^{T}(\Gamma)$,
so by linearity we get Eq. (\ref{eq:AdditivityLinear}).
\end{proof}
Finally, we can prove the theorem.
\begin{proof}[Proof of Theorem \ref{thm:Equivariant-class-Linear}.]
Suppose that $\Gamma=e$ is the graph with only one edge and labels
$(q_{i},q_{j})$ corresponding to a degree $d$ map $(\mathbb{P}^{1},f)$.
The equivariant polynomial $r!\ch_{r}^{T}(\pi_{*}\mathcal{L}(a))(\Gamma)$
is given by the sum of the $r$-powers of all weights of the action
on $H^{0}(\mathbb{P}^{1},f^{*}\mathcal{O}_{\mathbb{P}^{n}}(a))$.
That is
\begin{eqnarray}
\ch_{r}^{T}(\pi_{*}\mathcal{L}(a))(\Gamma) & = & \frac{1}{r!}\sum_{k=0}^{ad}\left(\frac{(ad-k)\lambda_{i}+k\lambda_{j}}{d}\right)^{r}\nonumber \\
 & = & \frac{1}{d^{r}}\frac{1}{r!}\sum_{k=0}^{ad}\left(\sum_{t=0}^{r}\dbinom{r}{t}(ad-k)^{t}k^{r-t}\lambda_{i}^{t}\lambda_{j}^{r-t}\right)\nonumber \\
 & = & \frac{1}{d^{r}}\frac{1}{r!}\sum_{t=0}^{r}\dbinom{r}{t}\left(\sum_{k=0}^{ad}(ad-k)^{t}k^{r-t}\right)\lambda_{i}^{t}\lambda_{j}^{r-t}.\label{eq:linear}
\end{eqnarray}
Equation (\ref{eq:linear}) tells us that each $\ch_{r}^{T}(\pi_{*}\mathcal{L}(a))(\Gamma)$
is a linear combination of monomials $\lambda_{i}^{t}\lambda_{j}^{r-t}$
for $t=0,\ldots,r$. This implies that also $[\pi_{*}(h^{r+1})]^{T}(\Gamma)$
is a linear combination of the same monomials. By linearity, the coefficient
of the monomial $\lambda_{i}^{t}\lambda_{j}^{r-t}$ in $[\pi_{*}(h^{r+1})]^{T}(\Gamma)$
is
\begin{equation}
\frac{1}{d^{r}}\frac{1}{r!}\frac{(r+1)!}{\det V}\det\left(\begin{array}{cccc}
1 & \ldots & 1^{r} & \dbinom{r}{t}\sum_{k=0}^{d}(d-k)^{t}k^{r-t}\\
2 & \ldots & 2^{r} & \dbinom{r}{t}\sum_{k=0}^{2d}(2d-k)^{t}k^{r-t}\\
\vdots & \vdots & \vdots & \vdots\\
r+1 & \ldots & (r+1)^{r} & \dbinom{r}{t}\sum_{k=0}^{(r+1)d}((r+1)d-k)^{t}k^{r-t}
\end{array}\right).\label{eq:CoefLambdaiLambdaj}
\end{equation}
In order to prove the theorem, we need to show that the value of (\ref{eq:CoefLambdaiLambdaj})
is $d$.
\begin{claim}
\label{claim:p}There exists a degree $r+1$ polynomial $p(x)=\sum_{i}p_{i}x^{i}\in\mathbb{Q}[x]$
such that $p(0)=0$, $p_{r+1}=(r+1)^{-1}$ and 
\begin{equation}
p(ad)=\dbinom{r}{t}\sum_{k=0}^{ad}(ad-k)^{t}k^{r-t},1\le a\le r+1.\label{eq:p(ad)}
\end{equation}
\end{claim}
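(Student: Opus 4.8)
The plan is to produce $p$ explicitly as the polynomial that represents, in the variable $N$, the arithmetic function appearing on the right-hand side of (\ref{eq:p(ad)}). Concretely, set
\[
F(N):=\dbinom{r}{t}\sum_{k=0}^{N}(N-k)^{t}k^{r-t},
\]
interpreted with the convention $0^{0}=1$, and I will argue that $F$ agrees with a polynomial of degree $r+1$; that polynomial is my $p$. Once $p$ is defined this way, (\ref{eq:p(ad)}) holds \emph{by construction} for every nonnegative integer $N=ad$, not merely for $1\le a\le r+1$, since $ad$ is a nonnegative integer and $p(ad)=F(ad)$. Thus only three things remain to be verified: polynomiality together with the degree, the vanishing of the constant term, and the value $1/(r+1)$ of the leading coefficient.

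First I would establish polynomiality and the degree. Expanding $(N-k)^{t}=\sum_{j=0}^{t}\binom{t}{j}(-1)^{t-j}N^{j}k^{t-j}$ and exchanging the two summations yields
\[
F(N)=\dbinom{r}{t}\sum_{j=0}^{t}\dbinom{t}{j}(-1)^{t-j}N^{j}\,P_{r-j}(N),\qquad P_{m}(N):=\sum_{k=0}^{N}k^{m}.
\]
By Faulhaber's formula each power sum $P_{m}(N)$ is a polynomial in $N$ of degree $m+1$ with leading coefficient $1/(m+1)$, so every summand is a polynomial in $N$ and hence so is $F$. Since $N^{j}P_{r-j}(N)$ has degree $j+(r-j)+1=r+1$ regardless of $j$, the degree of $F$ is at most $r+1$; I then take $p:=F$.

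For the constant term I would evaluate directly at $N=0$: the defining sum of $F(0)$ reduces to the single term $(0)^{t}\,0^{r-t}$, and since $t+(r-t)=r\ge 1$ at least one exponent is positive, forcing the product to vanish, so $p(0)=0$. For the leading coefficient I would collect the degree-$(r+1)$ parts of all summands, obtaining
\[
p_{r+1}=\dbinom{r}{t}\sum_{j=0}^{t}\dbinom{t}{j}(-1)^{t-j}\frac{1}{r-j+1}.
\]
The cleanest way to evaluate this is to read it off as a Riemann sum: the leading coefficient equals $\lim_{N\to\infty}F(N)/N^{r+1}=\binom{r}{t}\int_{0}^{1}(1-u)^{t}u^{r-t}\,\d u=\binom{r}{t}B(t+1,\,r-t+1)$, and the Beta integral equals $t!\,(r-t)!/(r+1)!$. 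Multiplying by $\binom{r}{t}=r!/(t!(r-t)!)$ collapses everything to $r!/(r+1)!=1/(r+1)$, which in particular is nonzero and therefore pins the degree of $p$ at exactly $r+1$.

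The one step needing genuine care is this evaluation of the alternating binomial sum (equivalently, the Beta integral) that produces the leading coefficient; the expansion, the appeal to Faulhaber, and the evaluation at $0$ are all bookkeeping. I expect to justify the value either through the standard identity $\sum_{j=0}^{t}\binom{t}{j}(-1)^{t-j}/(r-j+1)=t!(r-t)!/(r+1)!$ or by computing $\int_{0}^{1}(1-u)^{t}u^{r-t}\,\d u$ outright, and it is precisely here that the combinatorial factor $\binom{r}{t}$ is engineered to cancel, leaving the required $1/(r+1)$.
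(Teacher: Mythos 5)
Your proof is correct, and its skeleton is the same as the paper's: the paper likewise constructs $p$ by expanding $(ad-k)^{t}$ binomially and packaging the power sums into Faulhaber polynomials, so after the index substitution $j\mapsto t-j$ your $p:=F$ is essentially the paper's $p(x)=\binom{r}{t}\sum_{j=0}^{t}(-1)^{j}\binom{t}{j}x^{t-j}S^{r+j-t}(x)$, and your verification of \eqref{eq:p(ad)} ``by construction'' replaces the paper's explicit back-substitution computation. The genuine divergence is in the one step you correctly flag as delicate, the evaluation of $p_{r+1}$: the paper reduces $\binom{r}{t}\sum_{j}(-1)^{j}\binom{t}{j}\frac{1}{r+j-t+1}$ to $\frac{1}{r+1}$ via two binomial-coefficient identities quoted from Knuth, whereas you read the leading coefficient off as $\lim_{N\to\infty}F(N)/N^{r+1}=\binom{r}{t}\int_{0}^{1}(1-u)^{t}u^{r-t}\,\d u=\binom{r}{t}B(t+1,r-t+1)=\frac{1}{r+1}$. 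The Riemann-sum/Beta route is self-contained and conceptually explains the cancellation of $\binom{r}{t}$, at the cost of a (trivially justified) limit interchange; the paper's route is purely finite combinatorics but leans on external identities. A side benefit of your formulation: by working throughout with $P_{m}(N)=\sum_{k=0}^{N}k^{m}$ under the fixed convention $0^{0}=1$, your chain stays consistent in the boundary case $t=r$, where the paper's step $S^{r+j-t}(ad)=\sum_{k=0}^{ad}k^{r+j-t}$ is off by the $k=0$ term when $r+j-t=0$; the resulting discrepancy is only the degree-$r$ monomial $x^{r}$, which is harmless for Theorem \ref{thm:Equivariant-class-Linear} because Lemma \ref{lem:DetMatrixV} sees only the leading coefficient, but your version avoids the wrinkle altogether. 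Do state explicitly the standing hypothesis $r\ge1$ (codimension at least $2$), which you already use to get $p(0)=0$.
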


\begin{proof}
For every integer $q\ge0$, let us denote by $S^{q}(x)$ the Faulhaber\footnote{See \cite{bernoulli1709dissertatio}, or \cite[pag 503]{Knuth} for
a modern reference.} polynomial of degree $q+1$. That is, a polynomial with leading coefficient
$(q+1)^{-1}$ such that for every integer $n\ge0$, $S^{q}(n)=\sum_{k=1}^{n}k^{q}$.
If we define
\[
p(x):=\dbinom{r}{t}\sum_{j=0}^{t}(-1)^{j}\dbinom{t}{j}x^{t-j}S^{r+j-t}(x),
\]
it is clear that $p(x)$ has degree $r+1$ and $p(0)=0$. Moreover,
it satisfies (\ref{eq:p(ad)}), indeed:
\begin{eqnarray*}
p(ad) & = & \dbinom{r}{t}\sum_{j=0}^{t}(-1)^{j}\dbinom{t}{j}(ad)^{t-j}S^{r+j-t}(ad)\\
 & = & \dbinom{r}{t}\sum_{j=0}^{t}(-1)^{j}\dbinom{t}{j}(ad)^{t-j}\left(\sum_{k=0}^{ad}k^{r+j-t}\right)\\
 & = & \dbinom{r}{t}\sum_{k=0}^{ad}\left(\sum_{j=0}^{t}(-1)^{j}\dbinom{t}{j}(ad)^{t-j}k^{r+j-t}\right)\\
 & = & \dbinom{r}{t}\sum_{k=0}^{ad}\left(\sum_{j=0}^{t}(-1)^{j}\dbinom{t}{j}(ad)^{t-j}k^{j}\right)k^{r-t}\\
 & = & \dbinom{r}{t}\sum_{k=0}^{ad}(ad-k)^{t}k^{r-t}.
\end{eqnarray*}
Let us compute the leading coefficient of $p(x)$:
\begin{eqnarray*}
p_{r+1} & = & \dbinom{r}{t}\sum_{j=0}^{t}(-1)^{j}\dbinom{t}{j}\frac{1}{r+j-t+1}\\
 & = & \frac{1}{r+1}\sum_{j=0}^{t}(-1)^{j}\dbinom{r}{t}\dbinom{t}{j}\frac{r+1}{r+j-t+1}\\
 & = & \frac{1}{r+1}\sum_{j=0}^{t}(-1)^{j}\dbinom{r-t+j}{j}\dbinom{r+1}{t-j}.
\end{eqnarray*}
If we apply Eq. (17) and (21) of \cite[§1.2.6]{Knuth}, we get
\begin{eqnarray*}
\frac{1}{r+1}\sum_{j=0}^{t}(-1)^{j}\dbinom{r-t+j}{j}\dbinom{r+1}{t-j} & = & \frac{1}{r+1}\sum_{j=0}^{t}\dbinom{t-r-1}{j}\dbinom{r+1}{t-j}\\
 & = & \frac{1}{r+1}\dbinom{t}{t},
\end{eqnarray*}
that is, $p_{r+1}=(r+1)^{-1}$. Hence, the claim is proved.
\end{proof}
Following the notation of Lemma \ref{lem:DetMatrixV}, we can denote
the matrix appearing in (\ref{eq:CoefLambdaiLambdaj}) by $V_{p(x)}$,
where $p(x)$ is the polynomial of Claim \ref{claim:p}. The value
of (\ref{eq:CoefLambdaiLambdaj}) is: 
\begin{eqnarray*}
\frac{1}{d^{r}}\frac{1}{r!}\frac{(r+1)!}{\det V}\det V_{p(x)} & = & \frac{1}{d^{r}}\frac{r+1}{\det V}d^{r+1}\frac{1}{r+1}\det V\\
 & = & d.
\end{eqnarray*}
Finally, 
\[
[\pi_{*}(h^{r+1})]^{T}(\Gamma)=d(\lambda_{i}^{r}+\lambda_{i}^{r-1}\lambda_{j}+\lambda_{i}^{r-2}\lambda_{j}^{2}+\lambda_{i}^{r-3}\lambda_{j}^{3}+\ldots+\lambda_{i}\lambda_{j}^{r-1}+\lambda_{j}^{r}).
\]
We can prove the theorem for any $\Gamma$ by induction on the number
of edges and using Lemma \ref{lem:AdditivityLinear}. This concludes
the proof.
\end{proof}
\begin{rem}
Note that by linearity of push forward, the equivariant class of curves
meeting a subvariety of codimension $r$ and degree $k$ is $k$-times
the class of the linear subspace of the same codimension. For this
reason, it is not restrictive to consider only linear subspaces in
the enumeration of contact curves.
\end{rem}

\section{Final remarks}

The explicit number of contact curves of degree $d$ in $\mathbb{P}^{2n+1}$
can be calculated combining the result of the previous sections. We
know, by Example \ref{exa:SL}, that the subscheme of contact lines
is a section of the Plücker line bundle of the Grassmannian of lines.
So all characteristic numbers of contact lines can be deduced by classical
Schubert calculus. For curves of higher degree, no generating formula
is known.
\begin{example}
Suppose we want to compute the number of contact lines in $\mathbb{P}^{3}$,
meeting $a_{2}$ lines and $a_{3}$ points. By Corollary \ref{cor:Enum},
Lemma \ref{lem:WoutMarks} and Equation (\ref{eq:Bott}), we know
that
\[
a_{2}+2a_{3}=3,
\]
and the number of such lines is
\begin{eqnarray*}
N_{1}(a_{2},a_{3}) & = & \int_{\overline{M}_{0,0}(\mathbb{P}^{3},1)}c_{\mathrm{top}}(\mathcal{E}_{d})(\pi_{*}(h^{2}))^{a_{2}}(\pi_{*}(h^{3}))^{a_{3}}\\
 & = & \sum_{\Gamma\in G(1)}\frac{1}{a_{\Gamma}}\frac{\ctopT(\mathcal{E}_{1})(\Gamma)\left([\pi_{*}(h^{2})]^{T}(\Gamma)\right)^{a_{2}}\left([\pi_{*}(h^{3})]^{T}(\Gamma)\right)^{a_{3}}}{\ctopT(N_{\Gamma})(\Gamma)}\\
 & = & \sum_{0\le i<j\le3}\frac{(\lambda_{i}+\lambda_{j})(\lambda_{i}+\lambda_{j})^{a_{2}}(\lambda_{i}^{2}+\lambda_{i}\lambda_{j}+\lambda_{j}^{2})^{a_{3}}}{\prod_{k\neq i,j}(\lambda_{i}-\lambda_{k})(\lambda_{j}-\lambda_{k})}.
\end{eqnarray*}
The value of $\ctopT(N_{\Gamma})(\Gamma)$ is known thanks to \cite{kontsevich1995enumeration}.
We expect that the above sum equals an integer number. For curves
of higher degree, we argue in the same way.
\end{example}

In Table \ref{tab:EnuNum}, we listed the number of rational contact
curves of degree $d$, denoted by $N_{d}(a_{2},a_{3})$, for all possible
values of $(a_{2},a_{3})$ and $d\le3$, computed using Bott formula
and implemented in \textit{Wolfram Mathematica}. Anyway, our results
can be generalized \textit{mutatis mutandis} to every odd dimensional
projective space. To my knowledge, characteristic numbers of contact
curves in spaces different from $\mathbb{P}^{3}$ have never been
investigated.

The code used here is available on author's website, or can be provided
upon request.

\begin{table}[h]
\centering%
\begin{tabular}{|c|c|c||c|c|c||c|c|c|}
\hline 
$d$ &
$(a_{2},a_{3})$ &
$N_{1}(a_{2},a_{3})$ &
$d$ &
$(a_{2},a_{3})$ &
$N_{2}(a_{2},a_{3})$ &
$d$ &
$(a_{2},a_{3})$ &
$N_{3}(a_{2},a_{3})$\tabularnewline
\hline 
\hline 
\multirow{2}{*}{$1$} &
$(3,0)$ &
$2$ &
\multirow{3}{*}{$2$} &
$(5,0)$ &
$40$ &
\multirow{4}{*}{$3$} &
$(7,0)$ &
$4160$\tabularnewline
\cline{2-3} \cline{3-3} \cline{5-6} \cline{6-6} \cline{8-9} \cline{9-9} 
 & $(1,1)$ &
$1$ &
 & $(3,1)$ &
$8$ &
 & $(5,1)$ &
$512$\tabularnewline
\cline{1-3} \cline{2-3} \cline{3-3} \cline{5-6} \cline{6-6} \cline{8-9} \cline{9-9} 
\multicolumn{1}{c}{} &
\multicolumn{1}{c}{} &
\multicolumn{1}{c|}{} &
 & $(1,2)$ &
$2$ &
 & $(3,2)$ &
$72$\tabularnewline
\cline{4-6} \cline{5-6} \cline{6-6} \cline{8-9} \cline{9-9} 
\multicolumn{1}{c}{} &
\multicolumn{1}{c}{} &
\multicolumn{1}{c}{} &
\multicolumn{1}{c}{} &
\multicolumn{1}{c}{} &
\multicolumn{1}{c|}{} &
 & $(1,3)$ &
$12$\tabularnewline
\cline{7-9} \cline{8-9} \cline{9-9} 
\end{tabular}\caption{\label{tab:EnuNum}Number of contact curves in $\mathbb{P}^{3}$ of
degree $d\le3$.}
\end{table}

The values of $N_{2}(5,0)$ and $N_{3}(7,0)$ were already found in
\cite{levcovitz2011symplectic,Eden}. One may try to get the number
of smooth contact curves of some degree by throwing out the reducible
ones. For example, contact conics are union of lines (see, e.g., \cite[Proposition 17]{LM07}).
Hence, also reducible contact cubics must be union of lines. Using
the fact that $N_{1}(1,1)=1$, it can be easily shown that there are
exactly $9$ reducible contact cubics through $3$ points and a line.
Since $N_{3}(1,3)=12$, it means that we have exactly $3$ smooth
contact cubics through these points and that line, as proved in \cite[Corollary 6.3]{kal}
with other techniques. The problem of finding the number of smooth
contact curves is interesting in its own and we hope to address these
questions elsewhere.

In Table \ref{tab:EnuNuminP5} and \ref{tab:EnuNuminP5-con} we listed
the number of contact curves in $\mathbb{P}^{5}$ of degree $1$ and
$2$ meeting $a_{i}$ linear subspaces of codimension $i$ for $i=2,3,4,5$.

\begin{table}[H]
\centering%
\begin{tabular}{|c|c||c|c|}
\hline 
$a=(a_{2},a_{3},a_{4},a_{5})$ &
$N_{1}(a)$ &
$(a_{2},a_{3},a_{4},a_{5})$ &
$N_{1}(a)$\tabularnewline
\hline 
\hline 
$(7,0,0,0)$ &
$14$ &
$(1,3,0,0)$ &
$4$\tabularnewline
\hline 
$(5,1,0,0)$ &
$9$ &
$(1,1,0,1)$ &
$1$\tabularnewline
\hline 
$(4,0,1,0)$ &
$4$ &
$(1,0,2,0)$ &
$2$\tabularnewline
\hline 
$(3,2,0,0)$ &
$6$ &
$(0,2,1,0)$ &
$2$\tabularnewline
\hline 
$(2,1,1,0)$ &
$3$ &
$(0,0,1,1)$ &
$1$\tabularnewline
\hline 
\end{tabular}\caption{\label{tab:EnuNuminP5}Number of contact curves in $\mathbb{P}^{5}$
of degree $1$.}
\end{table}
\begin{table}[H]
\centering%
\begin{tabular}{|c|c||c|c||c|c|}
\hline 
$a=(a_{2},a_{3},a_{4},a_{5})$ &
$N_{2}(a)$ &
$(a_{2},a_{3},a_{4},a_{5})$ &
$N_{2}(a)$ &
$(a_{2},a_{3},a_{4},a_{5})$ &
$N_{2}(a)$\tabularnewline
\hline 
\hline 
$(11,0,0,0)$ &
$103876$ &
$(4,2,1,0)$ &
$624$ &
$(1,5,0,0)$ &
$288$\tabularnewline
\hline 
$(9,1,0,0)$ &
$30864$ &
$(4,0,1,1)$ &
$541$ &
$(1,3,0,1)$ &
$28$\tabularnewline
\hline 
$(8,0,1,0)$ &
$5798$ &
$(3,4,0,0)$ &
$912$ &
$(1,2,2,0)$ &
$48$\tabularnewline
\hline 
$(7,2,0,0)$ &
$9420$ &
$(3,2,0,1)$ &
$76$ &
$(1,1,0,2)$ &
$4$\tabularnewline
\hline 
$(7,0,0,1)$ &
$544$ &
$(3,1,2,0)$ &
$152$ &
$(1,0,2,1)$ &
$8$\tabularnewline
\hline 
$(6,1,1,0)$ &
$1898$ &
$(3,0,0,2)$ &
$8$ &
$(0,4,1,0)$ &
$64$\tabularnewline
\hline 
$(5,3,0,0)$ &
$2924$ &
$(2,3,1,0)$ &
$200$ &
$(0,2,1,1)$ &
$8$\tabularnewline
\hline 
$(5,1,0,1)$ &
$202$ &
$(2,1,1,1)$ &
$22$ &
$(0,1,3,0)$ &
$12$\tabularnewline
\hline 
$(5,0,2,0)$ &
$436$ &
$(2,0,3,0)$ &
$44$ &
$(0,0,1,2)$ &
$2$\tabularnewline
\hline 
\end{tabular}\caption{\label{tab:EnuNuminP5-con}Number of contact curves in $\mathbb{P}^{5}$
of degree $2$.}
\end{table}
\bibliographystyle{amsalpha}
\bibliography{0C__Users_magno_Dropbox_Universita_Ricerche_Contact_Curves_Latex_RefContactCurves}

\end{document}